\newcommand\shorttitle{Partitions into Piatestki-Shapiro sequences}
\newcommand\authors{\small Nian Hong Zhou and  Ya-Li Li}
\ifodd\value{page}
\authors
\shorttitle
\theoremstyle{plain}
\newtheorem{theorem}{Theorem}[section]
\newtheorem{lemma}[theorem]{Lemma}
\newtheorem{corollary}[theorem]{Corollary}
\newtheorem{proposition}[theorem]{Proposition}
\theoremstyle{remark}
\newtheorem{remark}{Remark}[section]
\newcommand{\Rmnum}[1]{\expandafter\@slowromancap\romannumeral #1@}
\def\ri{\mathrm i}
\def\qb{\mathbb Q}
\def\rb{\mathbb R}
\def\nb{\mathbb N}
\def\zb{\mathbb Z}
\def\cb{{\mathbb C}}
\def\rrw{\rightarrow}
\def\res{\mathop{\rm{Res}}}
\numberwithin{equation}{section}
\title{\LARGE \bf Partitions into Piatetski-Shapiro sequences}
\author{
Nian Hong Zhou and Ya-Li Li\footnote{Corresponding author}
}
\date{}
\begin{document}

\maketitle

%\keywords{Partitions, Crank, Rank, Asymptotics.}
%\subjclass[2010]{Primary: 11F27; Secondary:  11P82, 05A17.}

%\thanks{This research was supported by the National Science Foundation of China (Grant No. 11571114).}

%\dedicatory{}

\begin{abstract}
Let $\kappa$ be a positive real number and $m\in\nb\cup\{\infty\}$ be given. Let $p_{\kappa, m}(n)$ denote the number of partitions
of $n$ into the parts from the Piatestki-Shapiro sequence $(\lfloor \ell^{\kappa}\rfloor)_{\ell\in \nb}$ with at most $m$ times (repetition allowed). In this paper we establish asymptotic formulas of Hardy-Ramanujan type for $p_{\kappa, m}(n)$, by employing a framework of asymptotics of partitions established by Roth-Szekeres in 1953, as well as some results on equidistribution.
\end{abstract}

{\small {\bf Keywords and phrases.} Partitions; Piatestki-Shapiro sequences; Asymptotics}

{\small {\bf Mathematics Subject Classification.} Primary 11P82; Secondary 11N37; 05A17}

%\tableofcontents

\section{Introduction and statement of results}\label{sec1}

\subsection{Background}

Let $\lfloor u\rfloor$ denote the integral part of the real number $u$. The \emph{Piatetski-Shapiro sequence} of parameter $\kappa(\in\rb_+)$ (in short $PS(\kappa)$) is the sequence $(\lfloor \ell^{\kappa}\rfloor)_{\ell\in \nb}$, which is studied  in various directions. Let $m\in\nb\cup\{\infty\}$ be given and $p_{\kappa, m}(n)$ denote the number of partitions of $n$ into the sequence $PS(\kappa)$ with at most $m$ times (repetition allowed). We simply write $p_{\kappa}(n)=p_{\kappa,\infty}(n)$ and $q_{\kappa}(n)=p_{\kappa,1}(n)$. When $\kappa=1$, $p_{1}(n)$ is the well-known unrestricted partition function $p(n)$ and $q_1(n)$ is partition function $q(n)$ with partitions into distinct parts.

Since the time of Euler, we have known that $p_{\kappa,m}(n)$ has the following generating function
\begin{equation}
G_{\kappa,m}(z):=\sum_{n\ge 0}p_{\kappa, m}(n)e^{-nz}=\prod_{\ell\ge 1}\sum_{0\le r\le m}e^{-zr\lfloor \ell^{\kappa}\rfloor},
\end{equation}
where $\Re(z)>0$.

\medskip

Hardy and Ramanujan \cite{HR1918} established the following celebrated asymptotic formulas for $p(n)$ and $q(n)$:
\begin{equation}\label{eqhr1}
p(n)\sim \frac{1}{4\sqrt{3}n}e^{\pi\sqrt{2n/3}}\; \mbox{and}\; q(n)\sim \frac{1}{4\sqrt[4]{3} n^{3/4}}e^{\pi\sqrt{n/3}},
\end{equation}
as $n\rrw \infty$. We call these types asymptotic formulas  of Hardy-Ramanujan type. After Hardy and Ramanujan, such type problems have been widely investigated in many works in the literature. For example, Ingham \cite{MR5522}, Meinardus \cite{MR62781}, Schwarz \cite{ MR236135, MR254004} and Richmond \cite{Richmond1975, MR382210, MR412137}, Roth and Szekeres \cite{MR0067913}, Wright \cite{MR1555393} has investigated and established powerful asymptotic results for various types of integer partition functions.
%For a class of integer partition in which its parts satisfy certain conditions, the above work gives sharp asymptotics of the partition function.
\medskip

Of particular interest are functions related to partitions into certain special sequences.  As an
application of a powerful asymptotic results for general functions, Roth and Szekeres \cite{MR0067913} established asymptotic formulas for partition functions with partitions into rather general polynomial, whenever the argument of this polynomial taking from positive integers or primes. For more, see for examples \cite{MR3856854, MR4059127, MR3730445, MR3459558, MR3991428, Zhou1} and \cite{ MR2372796, MR1848510} for partitions into polynomials and primes, respectively. Erd\H{o}s and Richmond \cite{MR535018} considered the Hardy-Ramanujan type for partitions into the \emph{Beatty sequences} $(\lfloor \alpha\ell\rfloor)_{\ell\in\nb}$, where $\alpha>1$ is an irrational number with finite irrationality measure. The first author \cite{Zhou2} improved their result to any irrational number $\alpha>1$.

\medskip

 We now go back to this paper's topic, partitions into the sequence $SP(\kappa)$. Recently, Chen and the second author in \cite{MR3319122, MR3511994, MR3778641} investigated the asymptotics of $r$-th root partition function with any $r>1$, which corresponds to the partition function $p_{1/r}(n)$. They established upper and low bound and some asymptotics for $p_{1/r}(n)$. For $r=2$, Luca and Ralaivaosaona \cite{MR3531239} established an asymptotic formula of Hardy-Ramanujan type for $p_{1/2}(n)$, that is,
\begin{align}\label{LR}
p_{1/2}(n)&\sim 2^{5/18}3^{-1/2}\pi ^{-1/2}\zeta (3)^{7/18}e^{2\zeta '(-1)+\zeta '(0)}n^{-8/9}\nonumber\\
&\quad \times \exp \Bigg (\frac{3\zeta (3)^{1/3}}{2^{1/3}}n^{2/3}+\frac{\zeta (2)}{2^{2/3}\zeta (3)^{1/3}}n^{1/3}-\frac{\zeta (2)^2}{24\zeta (3)}\Bigg ),
\end{align}
as $n\rrw \infty$, where $\zeta(s)$ is the Riemann
zeta function. In \cite{MR4204766}, Chern gave the asymptotic formula of Hardy-Ramanujan type for $q_{1/2}(n)$, the square-root partitions into distinct parts, by adjusting the well-known approach of Meinardus \cite{MR62781}, that is,
\begin{align}\label{C}
q_{1/2}(n)&\sim 2^{-7/6}3^{-1/3}\pi ^{-1/2}\zeta (3)^{1/6}n^{-2/3}\nonumber\\&\quad \times \exp \Bigg (\frac{3^{4/3}\zeta (3)^{1/3}}{2}n^{2/3}+\frac{\zeta (2)}{2\cdot 3^{1/3}\zeta (3)^{1/3}}n^{1/3}-\frac{\zeta (2)^2}{72\zeta (3)}\Bigg ).
\end{align}
The asymptotic formula of Hardy-Ramanujan type for $k$-th root partition $p_{1/k}(n)$ with any $k\in\nb$ was established by Wu and the second author of this paper in a very recent work \cite{LW}.

\subsection{Main results}
In this paper, we shall investigate and establish the asymptotic formula of Hardy--Ramanujan type for $p_{\kappa, m}(n)$, for any $\kappa\in \rb_+$ and any $m\in\nb\cup \{\infty\}$. Throughout this paper, we write $\alpha=1/\kappa$. To state our main results, we need the following so-called \emph{Piatestki-Shapiro zeta function}:
\begin{equation}
\zeta_{\kappa}(s)=\sum_{n\ge 1}\frac{1}{\lfloor n^\kappa\rfloor^s}, \quad \Re(s)>\alpha.
\end{equation}
Furthermore, we need the following notations and definitions. Let ${\bf 1}_{event}$ be the indicator function.
For $u\in\rb$, let $\lceil u\rceil$ and $\{u\}$ denote the smallest integer $\ge u$ and the fractional part of $u$, respectively. For $z\in\cb$, let
$$E_{0}(z):=(1-z), \quad E_{h}(z)=(1-z)\exp\left(\sum_{1\le j\le h}z^j/j\right)(h\ge 1)$$
be the canonical factor. For any function $f$ and $x\in\rb$, let $f(\infty):=\lim\limits_{m\rrw +\infty}f(m)$  and $\widetilde{B}_1(x):=\{x\}-1/2$. We first prove the following proposition.

\begin{proposition}\label{pro1} For some real number $\sigma_{\kappa}\in(0,1]$, $\zeta_{\kappa}(s)$ can be monomorphic continued analytically to $\Re(s)>-\sigma_{\kappa}$ whose singularities are simple poles at
$s=\alpha, \alpha-1, \ldots, \alpha+1-\lceil \alpha\rceil,$
and their residues are
$$\res_{s=\alpha-h}\left(\zeta_\kappa(s)\right)=\frac{\Gamma(\alpha+1)}{(h+1)!\Gamma(\alpha-h)}, \quad h=0, 1, \cdots, \lceil \alpha\rceil-1.$$
Furthermore, for all $s$ with $\Re(s)\ge\varepsilon-\sigma_{\kappa}$ and $|\Im(s)|\ge 1$  with any $\varepsilon>0$, we have
$$\zeta_{\kappa}(s)\ll_\varepsilon |s|(|s|+1)^{\lfloor \alpha\rfloor+1}.$$
Moreover,
$$\zeta_{\kappa}(0)=-\frac{{\bf 1}_{\alpha\not\in\nb}}{2}-\frac{{\bf 1}_{\alpha\in\nb}}{1+\kappa},$$
and
$$\zeta_{\kappa}'(0)=\begin{cases}
\qquad\qquad\qquad \sum\limits_{0\le h<\alpha}\binom{\alpha}{h}\zeta'(-h), &\alpha\in\nb,\\
\sum\limits_{1\le h\le \lceil\alpha\rceil}\frac{\zeta(h-\alpha)-1}{h}+\sum\limits_{n\ge 2}\log\left(\frac{E_0(1/n)^{\widetilde{B}_1(n^{\alpha})+{\bf 1}_{n^{\alpha}\in\nb}}}{E_{\lceil\alpha\rceil}(1/n)^{n^{\alpha}}}\right), &\alpha\not\in\nb.
\end{cases}$$
\end{proposition}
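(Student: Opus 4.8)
My plan is to obtain the meromorphic continuation of $\zeta_\kappa(s)$ by writing $\lfloor n^\kappa\rfloor = n^\kappa - \{n^\kappa\}$ and comparing with the classical Riemann zeta function. The first step is to establish, for $\Re(s)$ large, the identity
\[
\zeta_\kappa(s) = \sum_{n\ge 1}\frac{1}{(n^\kappa - \{n^\kappa\})^s}
= \sum_{n\ge 1}\frac{1}{n^{\kappa s}}\left(1-\frac{\{n^\kappa\}}{n^\kappa}\right)^{-s},
\]
and then to expand $(1-\{n^\kappa\}/n^\kappa)^{-s}$ via the binomial series. Splitting off the first $\lceil\alpha\rceil$ terms of this expansion produces a finite linear combination $\sum_{0\le h<\lceil\alpha\rceil}\binom{-s}{h}(-1)^h\sum_n \{n^\kappa\}^h n^{-\kappa(s+h)}$ of Dirichlet series, plus a tail $R(s)$ whose general term is $O(n^{-\kappa s - \lceil\alpha\rceil\cdot 1})$ — wait, more carefully $O(n^{-\kappa\Re(s)-\kappa\lceil\alpha\rceil+\lceil\alpha\rceil})$, which converges absolutely and is holomorphic for $\Re(s) > -\sigma_\kappa$ once $\sigma_\kappa\in(0,1]$ is chosen small enough (the precise value of $\sigma_\kappa$ is forced by the exponent $\kappa(\Re(s)+\lceil\alpha\rceil)-\lceil\alpha\rceil>1$, i.e. $\Re(s)>-\lceil\alpha\rceil+(\lceil\alpha\rceil+1)/\kappa-$ something; I would just define $\sigma_\kappa$ to be whatever that bound gives, intersected with $(0,1]$). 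Each finite Dirichlet series $D_h(w):=\sum_n\{n^\kappa\}^h n^{-w}$ must then itself be continued; this is where equidistribution enters, since $\{n^\kappa\}^h$ is $n$-on-average equal to $\int_0^1 t^h\,dt = 1/(h+1)$, so $D_h(w) = \frac{1}{h+1}\zeta(w) + (\text{correction continuable past } \Re(w)=1)$. The correction is handled by partial summation against the discrepancy bound for $(\{n^\kappa\})_n$, which is the source of the polynomial growth estimate $\zeta_\kappa(s)\ll|s|(|s|+1)^{\lfloor\alpha\rfloor+1}$ in vertical strips.

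Granting the continuation, the pole structure and residues follow by bookkeeping: the pole of $\zeta(w)$ at $w=1$ sitting inside $\frac{1}{h+1}D_h$ with $w=\kappa(s+h)$ contributes a simple pole at $s = 1/\kappa - h = \alpha - h$ for $h=0,1,\dots,\lceil\alpha\rceil-1$, with residue obtained from $\binom{-s}{h}(-1)^h\cdot\frac{1}{h+1}\cdot\frac1\kappa$ evaluated at $s=\alpha-h$; simplifying $\binom{-(\alpha-h)}{h}(-1)^h = \binom{\alpha-1}{h}$ and using $\frac1\kappa=\alpha$ should collapse to $\frac{\Gamma(\alpha+1)}{(h+1)!\,\Gamma(\alpha-h)}$ after an identity of the form $\alpha\binom{\alpha-1}{h}=\frac{\Gamma(\alpha+1)}{h!\,\Gamma(\alpha-h)}$, which together with the extra $1/(h+1)$ gives exactly the claimed formula. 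For the value $\zeta_\kappa(0)$: when $\alpha\notin\nb$ the only "main" contribution at $s=0$ comes from the $h=0$ term $\frac{1}{1}\zeta(\kappa s)\to\zeta(0)=-\frac12$ provided $s=0$ is not a pole (true since $\alpha\notin\nb$ means $\alpha-h\neq0$), plus the tail which I will show vanishes at $0$; when $\alpha\in\nb$, $s=0$ coincides with the pole location $\alpha-h$ only when... actually $\alpha\in\nb$ makes the analysis cleaner because $\{n^\kappa\}$ has a different character — here I would instead directly use $\lfloor n^{1/\alpha}\rfloor$... no: $\kappa=1/\alpha$. Let me restate: $\alpha\in\nb$ means $\kappa=1/\alpha\le1$, and then I would use the substitution grouping $n$ by the value of $\lfloor n^\kappa\rfloor$, i.e. $\zeta_\kappa(s)=\sum_{k\ge1}k^{-s}\#\{n:\lfloor n^\kappa\rfloor=k\}$ and $\#\{n:\lfloor n^\kappa\rfloor=k\}=\lfloor(k+1)^\alpha\rfloor-\lfloor k^\alpha\rfloor$ (since $\alpha\in\nb$, $(k+1)^\alpha,k^\alpha$ are integers, so this is $(k+1)^\alpha-k^\alpha$, a polynomial in $k$ of degree $\alpha-1$); expanding via the binomial theorem writes $\zeta_\kappa(s)$ as a finite $\zb$-linear combination of shifted $\zeta(s-j)$, from which $\zeta_\kappa(0)=-1/(1+\kappa)$ and $\zeta_\kappa'(0)=\sum_{0\le h<\alpha}\binom{\alpha}{h}\zeta'(-h)$ fall out immediately by differentiating the finite sum and using $\zeta(-j)$, $\zeta'(-j)$ values.

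For $\zeta_\kappa'(0)$ in the harder case $\alpha\notin\nb$, I expect the cleanest route is the logarithmic-derivative / Weierstrass-product identity that the problem statement is hinting at: write $\zeta_\kappa(s) = \sum_n \lfloor n^\kappa\rfloor^{-s}$ and consider $\exp(-\zeta_\kappa'(0))$ as a regularized product $\prod_n \lfloor n^\kappa\rfloor$; comparing with $\prod_n n^\kappa$ (whose regularization is $\exp(-\kappa\zeta'(0)) = \exp(\kappa/2\cdot\log2\pi)$ type data, but here with the $\zeta_\kappa$-continuation the relevant anchor is $\zeta(\kappa s)$) and multiplying/dividing by the canonical factors $E_{\lceil\alpha\rceil}(1/n)$ to regularize, one gets the stated sum $\sum_{1\le h\le\lceil\alpha\rceil}\frac{\zeta(h-\alpha)-1}{h} + \sum_{n\ge2}\log(\cdots)$. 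Concretely: from $\zeta_\kappa(s)=\sum_{0\le h<\lceil\alpha\rceil}\binom{-s}{h}(-1)^h D_h(\kappa(s+h)) + R(s)$ with $D_h(w)=\frac{1}{h+1}\zeta(w)+\widetilde D_h(w)$, I differentiate at $s=0$: the $h=0$ term gives $\kappa\zeta'(0)$ contributions that combine with $R'(0)$ and the $\widetilde D_h$ pieces; organizing the $n\ge2$ summands (the $n=1$ term is $\lfloor 1\rfloor=1$, contributing $0$) into the canonical-factor logarithms handles convergence, and the closed sum $\sum_{1\le h\le\lceil\alpha\rceil}\frac{\zeta(h-\alpha)-1}{h}$ is exactly the "finite part" bookkeeping of the first $\lceil\alpha\rceil$ binomial terms evaluated via $\zeta(h-\alpha)$, with the $-1$ subtracting the $n=1$ term that must be excluded from each $\zeta$. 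The main obstacle throughout is not any single step but the careful matching of the several regularizations so that the $n=1$ anomalies (where $\lfloor n^\kappa\rfloor=1$ and $\log$ would misbehave), the exact choice of $\sigma_\kappa$, and the precise growth exponent $\lfloor\alpha\rfloor+1$ all come out as stated; I would pin down $\sigma_\kappa$ and the discrepancy-based vertical-strip bound first, since everything else is formal manipulation once those analytic facts are in hand.
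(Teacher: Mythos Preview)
Your treatment of the case $\alpha\in\nb$ (reindex by the value $k=\lfloor\ell^\kappa\rfloor$, note the multiplicity is $(k+1)^\alpha-k^\alpha$, expand binomially into shifted zeta values) is exactly the paper's argument, and your residue bookkeeping is correct. But for $\alpha\notin\nb$ your decomposition has a genuine gap once $\alpha>2$.

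The problem is the continuation of your series $D_h(w)=\sum_n\{n^\kappa\}^h n^{-w}$ for $h\ge1$. Subtracting the mean $\tfrac{1}{h+1}\zeta(w)$ and invoking the discrepancy of $(\{n^\kappa\})_n$ continues the correction $\widetilde D_h(w)$ only to $\Re(w)>1-\delta$, where $\delta$ is governed by $\sum_{n\le N}\widetilde B_1(n^\kappa)\ll N^{1-\delta}$. For $0<\kappa<1$ one has $\sum_{n\le N}e^{2\pi i m n^\kappa}\asymp_m N^{1-\kappa}$ (the first-derivative test is sharp here), so $\delta\le\kappa$ and $\widetilde D_1(w)$ continues only to $\Re(w)>1-\kappa$. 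You need $\widetilde D_1(\kappa(s+1))$ at $\Re(s)$ slightly negative, i.e.\ at $\Re(w)\approx\kappa$; the gap has width $1-2\kappa$, positive as soon as $\kappa<\tfrac12$, i.e.\ $\alpha>2$. In that range your formula never reaches $\Re(s)\le0$, so it cannot exhibit the poles at $\alpha-2,\alpha-3,\dots$ or evaluate $\zeta_\kappa(0)$, $\zeta_\kappa'(0)$. Taking more terms in the binomial expansion does not help: the tail $R(s)$ improves, but the $h=1$ term remains the bottleneck.

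The paper sidesteps this by reindexing for \emph{all} $\alpha$, writing
\[
\zeta_\kappa(s)=\sum_{n\ge2}\Big((n-1)^{-s}-n^{-s}\Big)\big(\lceil n^\alpha\rceil-1\big)
\]
and then Taylor-expanding $(1-1/n)^{-s}$ rather than $(1-\{n^\kappa\}/n^\kappa)^{-s}$. The only nontrivial Dirichlet series that appears is $\hat\zeta_\alpha(s+1)=\sum_n\widetilde B_1(n^\alpha)/n^{s+1}$, which involves the fractional parts of $n^\alpha$ (not $n^\kappa$) and needs to be pushed only a \emph{fixed} small distance past $\Re(w)=1$; van der Corput on $(n^\alpha)$ supplies a positive saving uniformly. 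In short, the reindexing trades a deep continuation problem for $(n^\kappa)$ against a shallow one for $(n^\alpha)$; this is the missing idea in your plan. If you adopt the reindexing for the non-integer case too, the rest of your outline (residues, the values at $s=0$, the canonical-factor expression for $\zeta_\kappa'(0)$) can be carried through.
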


\begin{remark}Since for $k\in \nb$, we have $\zeta_{k}(s)=\zeta(k s)$. Note that $2\zeta'(0)=-\log(2\pi)$, by the above expression of $\zeta_{\kappa}'(0)$ we obtain that
$$-\frac k 2\log(2\pi)+1=\zeta(1-1/k)+\sum_{n\ge 2}\log\left(\frac{E_0(1/n^{k})E_0(1/n)^{\widetilde{B}_1(n^{1/k})}}{E_{1}(1/n)^{n^{1/k}}}\right),$$
for all integers $k\ge 2$. For $k=2,3$, the above identities has passed the numerical examination of {\bf Mathematica}.
\end{remark}
Under the above proposition, our main results are stated as follows.
\begin{theorem}\label{mth1}For any $m\in\nb\cup\{\infty\}$, as $n\rrw \infty$
\begin{align*}
p_{\kappa, m}(n)\sim \lambda_{\kappa,m,c}\left(\frac{\beta_{\kappa,m}}{n}\right)^{\delta_{\kappa,m}}\exp\left(\sum_{0\le h\le \alpha}\lambda_{\kappa,m}(h)\left(\frac{n}{\beta_{\kappa,m}}\right)^{\frac{\alpha-h}{\alpha+1}}\right),
\end{align*}
where $\delta_{\kappa,m}=\frac{1}{2}+\frac{1/2-\zeta_{\kappa}(0){\bf 1}_{m=\infty}}{1+\alpha}$,
$$(\beta_{\kappa,m}, \lambda_{\kappa,m,c})=\left(\frac{\left(1-(m+1)^{-\alpha}\right)\zeta(\alpha+1)}{\alpha^{-1}\Gamma(\alpha+1)^{-1}}, \frac{(m+1)^{\zeta_{\kappa}(0)}{\bf 1}_{m\in\nb}+e^{\zeta_{\kappa}'(0)}{\bf 1}_{m=\infty}}{\sqrt{2\pi \beta_{\kappa,m}(\alpha+1)}}\right),
$$
and $\lambda_{\kappa,m}(h)$ are computable real constants.
\end{theorem}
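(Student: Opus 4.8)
The plan is to carry out a circle-method and saddle-point analysis of $G_{\kappa,m}(z)$ within the framework of Roth and Szekeres \cite{MR0067913}, feeding the analytic data of Proposition~\ref{pro1} into the major arc and an equidistribution estimate for the Piatetski-Shapiro sequence into the minor arc. The first task is to determine the behaviour of $\log G_{\kappa,m}(z)$ and of its first two $z$-derivatives as $z\to 0^{+}$. Expanding $\log(1+w+\cdots+w^{m})$ into a double Dirichlet series in the variables $e^{-z\lfloor\ell^{\kappa}\rfloor}$ and using the Mellin transform $\int_{0}^{\infty}e^{-x}x^{s-1}\,dx=\Gamma(s)$ gives, for $\Re(z)>0$ and $c>\alpha$,
\[
\log G_{\kappa,m}(z)=\frac{1}{2\pi i}\int_{c-i\infty}^{c+i\infty}\Gamma(s)\,\zeta(s+1)\,\zeta_{\kappa}(s)\bigl(1-{\bf 1}_{m\in\nb}(m+1)^{-s}\bigr)z^{-s}\,ds .
\]
Using the polynomial growth bound of Proposition~\ref{pro1} one may push the contour to $\Re(s)=\varepsilon-\sigma_{\kappa}$, crossing the simple poles $s=\alpha,\alpha-1,\dots,\alpha+1-\lceil\alpha\rceil$ of $\zeta_{\kappa}$ and the pole at $s=0$ (double when $m=\infty$, and simple when $m\in\nb$, the extra zero coming from $1-(m+1)^{-s}$). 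This yields, as $z\to 0$ in a sector about $\rb_{+}$,
\[
\log G_{\kappa,m}(z)=\sum_{0\le h<\alpha}a_{\kappa,m}(h)\,z^{-(\alpha-h)}-{\bf 1}_{m=\infty}\,\zeta_{\kappa}(0)\log z+b_{\kappa,m}+O\bigl(z^{\sigma_{\kappa}-\varepsilon}\bigr),
\]
where $a_{\kappa,m}(0)=\Gamma(\alpha)\zeta(\alpha+1)\bigl(1-{\bf 1}_{m\in\nb}(m+1)^{-\alpha}\bigr)\res_{s=\alpha}\zeta_{\kappa}(s)=\beta_{\kappa,m}/\alpha$ by Proposition~\ref{pro1}, the remaining $a_{\kappa,m}(h)$ are explicit, and $b_{\kappa,m}$ equals $\zeta_{\kappa}'(0)$ if $m=\infty$ and $\zeta_{\kappa}(0)\log(m+1)$ if $m\in\nb$ (the constant part of the residue at $s=0$). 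Differentiating in $z$, legitimate by Cauchy's estimates since $\log G_{\kappa,m}$ is analytic, furnishes the matching expansions for $(\log G_{\kappa,m})'$ and $(\log G_{\kappa,m})''$.

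Next I would locate the saddle point. Since $(\log G_{\kappa,m})'$ is negative and strictly increasing on $(0,\infty)$, the equation $-(\log G_{\kappa,m})'(\rho)=n$ has a unique solution $\rho=\rho_{n}\to 0^{+}$ for each large $n$, and bootstrapping from the leading relation $\alpha\,a_{\kappa,m}(0)\,\rho_{n}^{-\alpha-1}(1+o(1))=n$ produces an asymptotic expansion of $\rho_{n}$ in powers of $(\beta_{\kappa,m}/n)^{1/(\alpha+1)}$, with leading term $(\beta_{\kappa,m}/n)^{1/(\alpha+1)}$; moreover $B_{n}:=(\log G_{\kappa,m})''(\rho_{n})$ is positive and of order $\rho_{n}^{-\alpha-2}$, so $\rho_{n}$ is a genuine saddle. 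By Fourier inversion, $p_{\kappa,m}(n)=\frac{e^{n\rho_{n}}}{2\pi}\int_{-\pi}^{\pi}G_{\kappa,m}(\rho_{n}+it)\,e^{int}\,dt$. On the major arc $|t|\le t_{0}$, with $t_{0}$ a suitable small power of $\rho_{n}$, Taylor expansion together with $(\log G_{\kappa,m})'(\rho_{n})=-n$ gives $G_{\kappa,m}(\rho_{n}+it)e^{int}=G_{\kappa,m}(\rho_{n})\exp\bigl(-\tfrac12 B_{n}t^{2}\bigr)(1+o(1))$, so this range contributes $G_{\kappa,m}(\rho_{n})(2\pi B_{n})^{-1/2}(1+o(1))$. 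The key analytic input is the minor-arc bound: writing $|G_{\kappa,m}(\rho_{n}+it)|=G_{\kappa,m}(\rho_{n})\exp(-\Delta_{n}(t))$, one must show $\Delta_{n}(t)\gg\rho_{n}^{-\eta}$ for some fixed $\eta>0$, uniformly for $t_{0}\le|t|\le\pi$, so that the minor arc contributes $o\bigl(G_{\kappa,m}(\rho_{n})B_{n}^{-1/2}\bigr)$. Since
\[
\Delta_{n}(t)\gg\sum_{\lfloor\ell^{\kappa}\rfloor\le 1/\rho_{n}}\min\Bigl(1,\ \frac{\|\lfloor\ell^{\kappa}\rfloor t/2\pi\|^{2}}{(\rho_{n}\lfloor\ell^{\kappa}\rfloor)^{2}}\Bigr),
\]
this follows once one knows that, uniformly in $t$, $\|\lfloor\ell^{\kappa}\rfloor t/2\pi\|\gg_{\varepsilon}\ell^{-\varepsilon}$ for a positive proportion of the $\ell$ in a suitable subrange of $[1,\rho_{n}^{-\alpha}]$: this is supplied by Weyl--van der Corput bounds for the exponential sums $\sum_{\ell\le L}\exp(2\pi i h\lfloor\ell^{\kappa}\rfloor t)$ together with, for $t$ close to a rational $a/q$ of small denominator, the equidistribution of $\lfloor\ell^{\kappa}\rfloor$ in residue classes modulo $q$ (when $\kappa\notin\nb$) or its explicit distribution (when $\kappa\in\nb$). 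These facts verify the hypotheses of the Roth--Szekeres theorem \cite{MR0067913}, which then yields $\log p_{\kappa,m}(n)=\log G_{\kappa,m}(\rho_{n})+n\rho_{n}-\tfrac12\log(2\pi B_{n})+o(1)$.

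It then remains to substitute and collect. Inserting the expansions of $\log G_{\kappa,m}(\rho_{n})$, of $n\rho_{n}$, and of $\tfrac12\log(2\pi B_{n})$ — each re-expanded in powers of $(\beta_{\kappa,m}/n)^{1/(\alpha+1)}$ using the expansion of $\rho_{n}$ — into the last display, the powers with negative exponent of $n$ are $o(1)$ and are absorbed into the ``$\sim$''. The powers $n^{(\alpha-h)/(\alpha+1)}$ with integer $0\le h\le\alpha$ survive and assemble into $\sum_{0\le h\le\alpha}\lambda_{\kappa,m}(h)(n/\beta_{\kappa,m})^{(\alpha-h)/(\alpha+1)}$, the $h=\alpha$ term appearing only when $\alpha\in\nb$ and being generated by the saddle-point expansion; because the stationary value of $\rho\mapsto n\rho+\log G_{\kappa,m}(\rho)$ is insensitive to the higher-order corrections of $\rho_{n}$, one gets $\lambda_{\kappa,m}(0)=(1+\alpha)\Gamma(\alpha+1)\bigl(1-{\bf 1}_{m\in\nb}(m+1)^{-\alpha}\bigr)\zeta(\alpha+1)=(1+\alpha)\beta_{\kappa,m}/\alpha$ and the other $\lambda_{\kappa,m}(h)$ come out as computable constants. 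Finally, $b_{\kappa,m}$, the term $-{\bf 1}_{m=\infty}\zeta_{\kappa}(0)\log\rho_{n}$ and $-\tfrac12\log(2\pi B_{n})$ combine, after exponentiation, into the prefactor $\lambda_{\kappa,m,c}(\beta_{\kappa,m}/n)^{\delta_{\kappa,m}}$: the powers of $n$ add up to $\delta_{\kappa,m}=\tfrac12+\frac{1/2-\zeta_{\kappa}(0){\bf 1}_{m=\infty}}{1+\alpha}$, and the constant collapses to $\lambda_{\kappa,m,c}=\bigl((m+1)^{\zeta_{\kappa}(0)}{\bf 1}_{m\in\nb}+e^{\zeta_{\kappa}'(0)}{\bf 1}_{m=\infty}\bigr)\big/\sqrt{2\pi\beta_{\kappa,m}(\alpha+1)}$, exactly as stated.

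The principal obstacle is the minor-arc estimate: one needs the equidistribution of the Piatetski-Shapiro sequence $(\lfloor\ell^{\kappa}\rfloor)$ — in particular its non-concentration on arithmetic progressions when $\kappa\notin\nb$ — with error terms effective and uniform enough in $t$ to beat the $B_{n}^{1/2}$ loss across the whole range $t_{0}\le|t|\le\pi$; this is exactly where the results on equidistribution mentioned in the abstract are invoked. A secondary, bookkeeping-heavy obstacle is the final collecting step: checking that precisely $\lfloor\alpha\rfloor+1$ exponential terms survive and that all the remaining constants collapse to the asserted closed forms for $\beta_{\kappa,m}$, $\delta_{\kappa,m}$ and $\lambda_{\kappa,m,c}$.
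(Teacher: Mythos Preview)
Your proposal is correct and follows essentially the same route as the paper: Mellin inversion of $\log G_{\kappa,m}$ plus the residue data of Proposition~\ref{pro1} for the major arc, equidistribution and van der Corput exponential-sum bounds for the minor arc, and a saddle-point evaluation to extract the final asymptotic. The only packaging difference is that the paper does not redo the circle method but instead verifies the Roth--Szekeres condition~(\ref{2}) (and cites Liardet--Thomas \cite{MR3330365} for the bounded-multiplicity case $m\in\nb$) to invoke their saddle-point formula as a black box, and then isolates the ``substitute and collect'' step as a standalone lemma (Lemma~\ref{lm}); also, the paper's minor-arc input is the equidistribution of the fractional parts $\{\ell^{\kappa}\}$ rather than of $\lfloor\ell^{\kappa}\rfloor$ in residue classes, which is what actually lets one pass from $\sum e^{2\pi i y\lfloor\ell^{\kappa}\rfloor}$ to $\sum e^{2\pi i y\ell^{\kappa}}$ before applying van der Corput.
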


\begin{remark} For the details of the calculation of $\lambda_{\kappa, m}(h)$,   see Lemma \ref{lm} of Section \ref{sec2}. In particular, with the help of {\bf Mathematica}, we can get
$$
\left(\lambda_{\kappa,m}(0), \lambda_{\kappa,m}(1)\right)=\left((1+\kappa)\beta_{\kappa,m}, \frac{(1-(m+1)^{1-\alpha})\zeta(\alpha){\bf 1}_{\alpha>1}}{2\Gamma(1+\alpha)^{-1}}\right),
$$
and
\begin{align*}
\begin{cases}\lambda_{\kappa,m}(2)=\frac{(1-(m+1)^{2-\alpha})\zeta(\alpha-1){\bf 1}_{\alpha> 2}}{6\Gamma(\alpha+1)^{-1}}-\frac{(\alpha-1)^2\lambda_{\kappa,m}(1)^2}{2\alpha\lambda_{\kappa,m}(0)},\\
\lambda_{\kappa,m}(3)=\frac{(1-(m+1)^{3-\alpha}){\bf 1}_{\alpha>3}}{24\zeta(\alpha-2)^{-1}\Gamma(\alpha+1)^{-1}}-\frac{(\alpha-1)(\alpha-2)\lambda_{\kappa,m}(1)\lambda_{\kappa,m}(2)}{\alpha\lambda_{\kappa,m}(0)}-\frac{(\alpha-4)(\alpha-1)^3\lambda_{\kappa,m}(1)^3}{6\alpha^2\lambda_{\kappa,m}(0)^2}.
\end{cases}
\end{align*}
\end{remark}

For the case $\kappa>1$, Theorem \ref{mth1} confirms a conjecture of Chen and the second author of this paper stated as in \cite{MR3778641}. We can get the following conclusion.
\begin{corollary}\label{mcor1}
Let $\kappa>1$. As $n\rrw \infty$
  \begin{align*}
p_{\kappa}(n)\sim c_\kappa n^{-\frac{\alpha+3}{2(1+\alpha)}}\exp\left((1+\kappa)\beta_{\kappa}^{\frac{1}{1+\alpha}}n^{\frac{1}{1+\kappa}}\right),
\end{align*} and
  \begin{align*}
q_{\kappa}(n)\sim c_{\kappa1} n^{-\frac{\alpha+2}{2(1+\alpha)}}\exp\left((1+\kappa)\left((1-2^{-\alpha})\beta_{\kappa}\right)^{\frac{1}{1+\alpha}}n^{\frac{1}{1+\kappa}}\right),
\end{align*}
where $\beta_\kappa=\alpha\Gamma(\alpha+1)\zeta(\alpha+1)$ and
$$(c_\kappa,c_{\kappa1})=\frac{\left(\sqrt{2}e^{\zeta_\kappa'(0)}\beta_{\kappa}^{\frac{1}{1+\alpha}}, \left(\left(1-2^{-\alpha}\right)\beta_{\kappa}\right)^{\frac{1}{2(1+\alpha)}}\right)}{2\sqrt{\pi(1+\alpha)}}.$$
\end{corollary}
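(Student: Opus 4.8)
The plan is to obtain Corollary~\ref{mcor1} as a direct specialization of Theorem~\ref{mth1} to $m=\infty$ and $m=1$, exploiting two simplifications available when $\kappa>1$. First, $\kappa>1$ forces $\alpha=1/\kappa\in(0,1)$, so $\alpha\notin\nb$, and Proposition~\ref{pro1} gives $\zeta_{\kappa}(0)=-1/2$. Second, the only non-negative integer $h$ with $h\le\alpha$ is $h=0$, so the sum $\sum_{0\le h\le\alpha}\lambda_{\kappa,m}(h)(n/\beta_{\kappa,m})^{(\alpha-h)/(\alpha+1)}$ in Theorem~\ref{mth1} collapses to the single term $\lambda_{\kappa,m}(0)(n/\beta_{\kappa,m})^{\alpha/(\alpha+1)}$; by the remark following Theorem~\ref{mth1}, $\lambda_{\kappa,m}(0)=(1+\kappa)\beta_{\kappa,m}$. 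Using $\alpha/(\alpha+1)=1/(1+\kappa)$, the exponential factor therefore equals $\exp\big((1+\kappa)\beta_{\kappa,m}^{1/(1+\alpha)}n^{1/(1+\kappa)}\big)$, which already has the shape claimed for both $p_{\kappa}$ and $q_{\kappa}$ once $\beta_{\kappa,m}$ is identified.

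For $p_{\kappa}(n)=p_{\kappa,\infty}(n)$ I set $m=\infty$. Then $(m+1)^{-\alpha}\to 0$, so $\beta_{\kappa,\infty}=\alpha\Gamma(\alpha+1)\zeta(\alpha+1)=\beta_{\kappa}$; the indicator ${\bf 1}_{m\in\nb}$ vanishes and ${\bf 1}_{m=\infty}=1$, so $\lambda_{\kappa,\infty,c}=e^{\zeta_{\kappa}'(0)}/\sqrt{2\pi\beta_{\kappa}(\alpha+1)}$; and, using $\zeta_{\kappa}(0)=-1/2$, $\delta_{\kappa,\infty}=\tfrac12+\tfrac{1/2-\zeta_{\kappa}(0)}{1+\alpha}=\tfrac12+\tfrac{1}{1+\alpha}=\tfrac{\alpha+3}{2(1+\alpha)}$. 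Substituting into Theorem~\ref{mth1} and writing $(\beta_{\kappa}/n)^{\delta_{\kappa,\infty}}=\beta_{\kappa}^{\delta_{\kappa,\infty}}n^{-\delta_{\kappa,\infty}}$, the net power of $\beta_{\kappa}$ in the amplitude is $\beta_{\kappa}^{\delta_{\kappa,\infty}-1/2}=\beta_{\kappa}^{1/(1+\alpha)}$, since $\delta_{\kappa,\infty}-1/2=1/(1+\alpha)$; collecting constants gives $c_{\kappa}=\dfrac{e^{\zeta_{\kappa}'(0)}\beta_{\kappa}^{1/(1+\alpha)}}{\sqrt{2\pi(1+\alpha)}}=\dfrac{\sqrt{2}\,e^{\zeta_{\kappa}'(0)}\beta_{\kappa}^{1/(1+\alpha)}}{2\sqrt{\pi(1+\alpha)}}$, as claimed.

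For $q_{\kappa}(n)=p_{\kappa,1}(n)$ I set $m=1$. Then $\beta_{\kappa,1}=(1-2^{-\alpha})\alpha\Gamma(\alpha+1)\zeta(\alpha+1)=(1-2^{-\alpha})\beta_{\kappa}$; the indicator ${\bf 1}_{m=\infty}$ appearing in $\delta_{\kappa,m}$ vanishes, so $\delta_{\kappa,1}=\tfrac12+\tfrac{1/2}{1+\alpha}=\tfrac{\alpha+2}{2(1+\alpha)}$; and since ${\bf 1}_{m=\infty}=0$ while ${\bf 1}_{m\in\nb}=1$ with $2^{\zeta_{\kappa}(0)}=2^{-1/2}$, one gets $\lambda_{\kappa,1,c}=2^{-1/2}/\sqrt{2\pi\beta_{\kappa,1}(\alpha+1)}=1/\big(2\sqrt{\pi\beta_{\kappa,1}(\alpha+1)}\big)$. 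Exactly as before, the net power of $\beta_{\kappa,1}$ in the amplitude is $\beta_{\kappa,1}^{\delta_{\kappa,1}-1/2}=\beta_{\kappa,1}^{1/(2(1+\alpha))}$, whence $c_{\kappa1}=\dfrac{\big((1-2^{-\alpha})\beta_{\kappa}\big)^{1/(2(1+\alpha))}}{2\sqrt{\pi(\alpha+1)}}$, and the exponential factor is $\exp\big((1+\kappa)((1-2^{-\alpha})\beta_{\kappa})^{1/(1+\alpha)}n^{1/(1+\kappa)}\big)$, matching the stated formula.

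Since the whole argument is a specialization, there is no genuine analytic obstacle here; the only place requiring care is the bookkeeping of the exponent of $\beta_{\kappa,m}$ in the amplitude, where one must combine the factor $\beta_{\kappa,m}^{-\delta_{\kappa,m}}$ coming from $(\beta_{\kappa,m}/n)^{\delta_{\kappa,m}}$ with the hidden factor $\beta_{\kappa,m}^{-1/2}$ inside $\lambda_{\kappa,m,c}$, while separately tracking the factor $\beta_{\kappa,m}^{1/(1+\alpha)}$ extracted from the exponential; this, together with the explicit inputs $\zeta_{\kappa}(0)=-1/2$ from Proposition~\ref{pro1} and $\lambda_{\kappa,m}(0)=(1+\kappa)\beta_{\kappa,m}$ from the remark after Theorem~\ref{mth1}, yields both displayed asymptotics.
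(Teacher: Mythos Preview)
Your argument is correct and is precisely the intended specialization: the paper presents Corollary~\ref{mcor1} as an immediate consequence of Theorem~\ref{mth1} with $m=\infty$ and $m=1$, using $\alpha\in(0,1)$ (so only the $h=0$ term survives and $\zeta_\kappa(0)=-1/2$ by Proposition~\ref{pro1}), and your computations of $\delta_{\kappa,m}$, $\beta_{\kappa,m}$, $\lambda_{\kappa,m,c}$ and the resulting constants $c_\kappa$, $c_{\kappa1}$ all match. One cosmetic slip: in your final summary paragraph the sign in ``$\beta_{\kappa,m}^{-\delta_{\kappa,m}}$'' should be $+\delta_{\kappa,m}$, and no factor of $\beta_{\kappa,m}$ is actually \emph{extracted} from the exponential (it remains inside as $\beta_{\kappa,m}^{1/(1+\alpha)}$); since your detailed computations in the preceding two paragraphs are correct, this does not affect the proof.
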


Using Theorem \ref{mth1},  we can find the same asymptotics as Luca-Ralaivaosaona \cite{MR3531239} and Chern \cite{MR4204766}, that is, asymptotics of $p_{1/2}(n)$ and $q_{1/2}(n)$ are \eqref{LR} and \eqref{C}, respectively.
Moreover, by Theorem \ref{mth1} we can also find the same leading asymptotic formulas as Wu and the second author of this paper in \cite{LW}, for the $k$-th root partition functions $p_{1/k}(n)$ \footnote{in \cite{LW}, some coefficients of asymptotic formula for $p_{1/k}(n)$ are not
explicitly given.}. In particular, by noting $\zeta(2)=\pi^2/6$, $\zeta(4)=\pi^4/90$ and $\zeta'(-2)=\zeta(3)/4\pi^2$, we have the following exactly asymptotic formulas.

\begin{corollary}\label{mcor3}As $n\rrw \infty$
\begin{align*}
p_{1/3}(n)\sim &\frac{(25\pi)^{1/4}e^{\frac{225 \zeta (3)^3}{\pi ^8}-\frac{2 \zeta (3)}{\pi ^2}+3\zeta'(-1)}}{4(5n)^{13/16}}\\
&\times\exp \left(\frac{4 \pi  (5n)^{3/4}}{15}+\frac{3\zeta (3)(5n)^{1/2}}{\pi ^2}+\frac{\left(\pi ^6-135 \zeta (3)^2\right)(5n)^{1/4}}{6 \pi ^5}\right),
\end{align*}
and
\begin{align*}
q_{1/3}(n)\sim&\frac{(5/14)^{1/2}e^{\frac{6075 \zeta (3)^3}{49 \pi ^8}-\frac{15 \zeta (3)}{28 \pi ^2}}}{2^{11/4}(5n/14)^{5/8}}\\
&\times\exp \left(\frac{28\pi\big(\frac{5n}{14}\big)^{3/4}}{15}+\frac{9\zeta (3)\big(\frac{5n}{14}\big)^{1/2}}{\pi ^2}+\frac{\left(7 \pi ^6-1215 \zeta (3)^2\right)\big(\frac{5n}{14}\big)^{1/4} }{42 \pi ^5}\right).
\end{align*}
\end{corollary}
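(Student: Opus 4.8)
The plan is to derive both formulas simply as the $\kappa=1/3$ specialization of Theorem \ref{mth1}, taking $m=\infty$ for $p_{1/3}(n)$ and $m=1$ for $q_{1/3}(n)$; then $\alpha=1/\kappa=3\in\nb$ throughout, so every quantity in the theorem becomes explicit once the elementary evaluations $\zeta(2)=\pi^2/6$, $\zeta(4)=\pi^4/90$, $2\zeta'(0)=-\log(2\pi)$ and the stated value of $\zeta'(-2)$ in terms of $\zeta(3)$ are inserted. First I would read off from Proposition \ref{pro1} (the $\alpha\in\nb$ branch, since $3\in\nb$) that $\zeta_{1/3}(0)=-1/(1+\kappa)=-3/4$ and $\zeta_{1/3}'(0)=\binom{3}{0}\zeta'(0)+\binom{3}{1}\zeta'(-1)+\binom{3}{2}\zeta'(-2)=\zeta'(0)+3\zeta'(-1)+3\zeta'(-2)$. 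Next, $\beta_{\kappa,m}=\alpha\Gamma(\alpha+1)\left(1-(m+1)^{-\alpha}\right)\zeta(\alpha+1)=18\left(1-(m+1)^{-3}\right)\zeta(4)=\tfrac{\pi^4}{5}\left(1-(m+1)^{-3}\right)$, so $\beta_{1/3}=\pi^4/5$ and $\beta_{1/3,1}=7\pi^4/40$; and $\delta_{\kappa,\infty}=\tfrac12+\tfrac{1/2+3/4}{4}=\tfrac{13}{16}$, $\delta_{\kappa,1}=\tfrac12+\tfrac{1/2}{4}=\tfrac58$.

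Next I would evaluate the coefficients $\lambda_{\kappa,m}(h)$, $0\le h\le3$, from the formulas recorded in the Remark following Theorem \ref{mth1}, which with $\alpha=3$ read $\lambda_{\kappa,m}(0)=(1+\kappa)\beta_{\kappa,m}=\tfrac43\beta_{\kappa,m}$, $\lambda_{\kappa,m}(1)=3\left(1-(m+1)^{-2}\right)\zeta(3)$, $\lambda_{\kappa,m}(2)=\left(1-(m+1)^{-1}\right)\zeta(2)-\tfrac{2\lambda_{\kappa,m}(1)^2}{3\lambda_{\kappa,m}(0)}$, and, since the indicator ${\bf 1}_{\alpha>3}$ vanishes, $\lambda_{\kappa,m}(3)=-\tfrac{2\lambda_{\kappa,m}(1)\lambda_{\kappa,m}(2)}{3\lambda_{\kappa,m}(0)}+\tfrac{4\lambda_{\kappa,m}(1)^3}{27\lambda_{\kappa,m}(0)^2}$. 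Substituting $\zeta(2)=\pi^2/6$, $\zeta(4)=\pi^4/90$ and the above value of $\beta_{\kappa,m}$ turns each $\lambda_{\kappa,m}(h)$ into an explicit expression in $\pi$ and $\zeta(3)$; for instance for $m=\infty$ one finds $\lambda_{1/3}(2)=\tfrac{\pi^2}{6}-\tfrac{45\zeta(3)^2}{2\pi^4}$ and $\lambda_{1/3}(3)=\tfrac{225\zeta(3)^3}{\pi^8}-\tfrac{5\zeta(3)}{4\pi^2}$, and similarly for $m=1$.

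Finally I would assemble the product of Theorem \ref{mth1}. In the exponent $\sum_{0\le h\le3}\lambda_{\kappa,m}(h)\left(n/\beta_{\kappa,m}\right)^{(3-h)/4}$ the term $h=3$ is the constant $\lambda_{\kappa,m}(3)$; I would pull it out front and merge $e^{\lambda_{\kappa,m}(3)}$ with $\lambda_{\kappa,m,c}\left(\beta_{\kappa,m}/n\right)^{\delta_{\kappa,m}}$. For the surviving terms $h=0,1,2$ I would substitute $n/\beta_{1/3}=5n/\pi^4$ in the $m=\infty$ case, respectively $n/\beta_{1/3,1}=\tfrac{16}{\pi^4}\cdot\tfrac{5n}{14}$ in the $m=1$ case; this extracts the powers of $\pi$ and of $2$ and produces the arguments $5n$ and $5n/14$ appearing in the statement. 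Collecting the remaining numerical factor: for $m=\infty$, $e^{\zeta_{1/3}'(0)+\lambda_{1/3}(3)}=(2\pi)^{-1/2}\exp\!\left(3\zeta'(-1)-\tfrac{2\zeta(3)}{\pi^2}+\tfrac{225\zeta(3)^3}{\pi^8}\right)$, which together with the $\pi$-powers coming from $\beta_{1/3}$ and $\delta_{1/3}$ yields the constant $(25\pi)^{1/4}/4$; for $m=1$ the factor $e^{\zeta_{1/3}'(0)}$ is replaced by $(m+1)^{\zeta_{1/3}(0)}=2^{-3/4}$, which is exactly why no $\zeta'(-1)$ survives in the $q_{1/3}$ asymptotic. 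I do not anticipate any conceptual difficulty: the entire argument is bookkeeping, and the one point that needs care is keeping the numerous rational constants and fractional powers of $\pi$ and $2$ straight when the constant exponential term $e^{\lambda_{\kappa,m}(3)}$ is folded into the algebraic prefactor; a symbolic-algebra verification (as the authors note they carried out in {\bf Mathematica}) is the natural safeguard.
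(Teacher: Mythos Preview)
Your proposal is correct and follows exactly the approach the paper intends: the corollary is stated as a direct specialization of Theorem~\ref{mth1} with $\kappa=1/3$, using the identities $\zeta(2)=\pi^2/6$, $\zeta(4)=\pi^4/90$ and $\zeta'(-2)=-\zeta(3)/(4\pi^2)$ (the paper's text has a sign typo here), and no separate proof is given beyond this remark. Your bookkeeping of $\zeta_{1/3}(0)$, $\zeta_{1/3}'(0)$, $\beta_{1/3,m}$, $\delta_{1/3,m}$, and the $\lambda_{1/3,m}(h)$ for $h=0,1,2,3$ is accurate, and the way you fold $e^{\lambda_{1/3,m}(3)}$ into the prefactor is precisely what produces the displayed constants.
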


Our paper is organized as follows: In Section \ref{sec2} we will prove
Theorem \ref{mth1}, thus establish an asymptotic formula $p_{\kappa,m}(n)$.
In Section~\ref{sec3} we use some results on equidistribution to check the sequence $PS(\kappa)$ meets the conditions of the framework of Roth and Szekeres \cite{MR0067913}. In Section~\ref{sec4} we use a theorem of van der Corput \cite{MR1544956}, the classical Taylor's theorem to prove Proportion \ref{pro1}, that is, the analytic continuation for the Piatestki-Shapiro zeta function $\zeta_\kappa(s)$.

\section{The proof of Theorem \ref{mth1}}\label{sec2}
In this section we give the proof of Theorem \ref{mth1}. Our proof is based on the framework of Roth and Szekeres \cite{MR0067913} and its generalization, that is, the work of Liardet and Thomas \cite{MR3330365}.  In order to be able to use this framework, we need to prove that the sequence $PS(\kappa)$ meets the following conditions.
\begin{enumerate}[(I)]
  \item \label{1} There exist $\mu>0$ such that
  $$ \mu=\lim_{H\rrw \infty}\frac{\log \lfloor H^{\kappa}\rfloor}{\log H},$$
  \item \label{2} We have\footnote{Note that the condition corresponding to $p_{\kappa,\infty}(n)$ in Roth and Szekeres \cite[p.259, (II*)]{MR0067913} is $$\lim_{H\rrw \infty}\inf_{\frac{1}{2\lfloor H^{\kappa}\rfloor}\le y\le \frac{1}{2}}\left(\frac{1}{\log H}\sum_{1\le \ell\le H}\left(\frac{\lfloor H^\kappa \rfloor}{\lfloor \ell^\kappa \rfloor}\right)^2\|\lfloor \ell^\kappa\rfloor y\|^2\right)=\infty.
$$
Obviously, this condition is weaker than that required by $p_{\kappa,m}(n)$ with $m\in\nb$.
}
  $$\lim_{H\rrw \infty}\inf_{\frac{1}{2\lfloor H^{\kappa}\rfloor}\le y\le \frac{1}{2}}\left(\frac{1}{\log H}\sum_{1\le \ell\le H}\|\lfloor \ell^\kappa\rfloor y\|^2\right)=\infty.
$$
\end{enumerate}

Clearly, the condition (\ref{1}) is easily to verified. For the condition (\ref{2}), we shall check it in Section \ref{sec3}, by employ the equidistribution properties of the sequence $(\{\ell^{\kappa}\})_{\ell\ge 1}$, as well as some works of van der Corput \cite{MR1544956}.

Define that for any $\kappa>0$
$$L_{\kappa}(z)=-\sum_{\ell\ge 1}\log\left(1-e^{-z\lfloor \ell^\kappa\rfloor}\right), \quad \Re(z)>0.$$
Then, since the sequence $PS(\kappa)$ satisfies the above two conditions, following the works of Roth and Szekeres \cite[Theorem 2]{MR0067913} and Liardet and Thomas \cite[Theorem 14.2]{MR3330365}, we can get the following proposition immediately.
\begin{proposition}\label{thm1}Let $m\in\nb$. For a small positive number $\varepsilon$,  as $n\rrw +\infty$
\begin{equation*}
p_{\kappa}(n)=\frac{\exp\left(L_{\kappa}(x)+nx\right)}{\sqrt{2\pi L_{\kappa}''(x)}}\left(1+O
\left(n^{-\frac{1}{\mu+1}+\varepsilon}\right)\right),
\end{equation*}
and
\begin{equation*}
p_{\kappa,m}(n)=\frac{\exp\left(L_{\kappa}(y)-L_{\kappa}((m+1)y)+ny\right)}{\sqrt{2\pi (L_{\kappa}''(y)-(m+1)^2L_{\kappa}''((m+1)y))}}\left(1+O
\left(n^{-\frac{1}{\mu+1}+\varepsilon}\right)\right),
\end{equation*}
where $x$ and $y$ are positive numbers which solves the equations
$$L_{\kappa}'(x)+n=0\;\; \mbox{and}\; L_{\kappa}'(y)-(m+1)L_{\kappa}'((m+1)y)+n=0,$$
respectively.
\end{proposition}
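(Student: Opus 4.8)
The plan is to obtain Proposition~\ref{thm1} as a direct application of two general saddle‑point theorems for restricted partitions: Theorem~2 of Roth and Szekeres~\cite{MR0067913} for the unrestricted function $p_{\kappa}(n)=p_{\kappa,\infty}(n)$, and Theorem~14.2 of Liardet and Thomas~\cite{MR3330365} for $p_{\kappa,m}(n)$ with $m\in\nb$. Both results take as input a nondecreasing sequence of positive integers and return an asymptotic of exactly the shape $\frac{\exp(L(x)+nx)}{\sqrt{2\pi L''(x)}}\bigl(1+O(n^{-1/(\mu+1)+\varepsilon})\bigr)$, with $\mu$ the growth exponent of the sequence and $x$ the real saddle point. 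The only hypotheses of these theorems that are not automatic for $PS(\kappa)$ are conditions~(\ref{1}) and~(\ref{2}) above: condition~(\ref{1}) is immediate, with $\mu=\kappa$ since $\log\lfloor H^{\kappa}\rfloor=\kappa\log H+O(1)$, while condition~(\ref{2}) is verified in Section~\ref{sec3} using van der Corput's equidistribution estimates \cite{MR1544956}. The remaining structural requirements — monotonicity of the sequence, the counting estimate $\#\{\ell\ge1:\lfloor\ell^{\kappa}\rfloor\le N\}\asymp N^{\alpha}$, and the mild regularity of $L_{\kappa}$ near $0$, which Proposition~\ref{pro1} more than provides — hold trivially for $PS(\kappa)$, so the body of the proof is this bookkeeping followed by a citation.

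For $p_{\kappa}(n)$ I would note that $G_{\kappa,\infty}(z)=\exp(L_{\kappa}(z))$, so Roth--Szekeres applies verbatim and produces the first displayed formula, with $x>0$ the saddle point solving $\frac{d}{dx}\bigl(L_{\kappa}(x)+nx\bigr)=0$, i.e.\ $L_{\kappa}'(x)+n=0$. Here $L_{\kappa}=\log G_{\kappa,\infty}$ is strictly convex and strictly decreasing on $(0,\infty)$ with $L_{\kappa}(0^{+})=+\infty$ and $L_{\kappa}(+\infty)=0$, so $t\mapsto L_{\kappa}'(t)+n$ increases strictly from $-\infty$ to $n>0$; hence $x$ exists and is unique, and the Gaussian factor is $\sqrt{2\pi L_{\kappa}''(x)}$, exactly as stated.

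For $m\in\nb$ the first step is to rewrite $G_{\kappa,m}(z)=\prod_{\ell\ge1}\frac{1-e^{-(m+1)z\lfloor\ell^{\kappa}\rfloor}}{1-e^{-z\lfloor\ell^{\kappa}\rfloor}}$, so that $\log G_{\kappa,m}(z)=L_{\kappa}(z)-L_{\kappa}((m+1)z)=:F(z)$. Such a quotient of two partition‑type products is precisely the object handled by Theorem~14.2 of \cite{MR3330365}, whose hypotheses for the pair of sequences $\bigl(\lfloor\ell^{\kappa}\rfloor\bigr)_{\ell}$, $\bigl((m+1)\lfloor\ell^{\kappa}\rfloor\bigr)_{\ell}$ again reduce to~(\ref{1}) and to condition~(\ref{2}) as stated (which in turn implies, term by term, the weaker weighted version recorded in the footnote that already suffices for the unrestricted case). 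The theorem then gives $p_{\kappa,m}(n)=\frac{\exp(F(y)+ny)}{\sqrt{2\pi F''(y)}}\bigl(1+O(n^{-1/(\mu+1)+\varepsilon})\bigr)$ with $y>0$ solving $F'(y)+n=0$, i.e.\ $L_{\kappa}'(y)-(m+1)L_{\kappa}'((m+1)y)+n=0$, and $F''(y)=L_{\kappa}''(y)-(m+1)^{2}L_{\kappa}''((m+1)y)$. Existence and uniqueness of $y$ follow as in the unrestricted case once we know $F$ is strictly convex: with $\psi(v):=\bigl(\frac{v}{2\sinh(v/2)}\bigr)^{2}$, which is positive and strictly decreasing on $(0,\infty)$, one computes $F''(t)=t^{-2}\sum_{\ell\ge1}\bigl(\psi(t\lfloor\ell^{\kappa}\rfloor)-\psi((m+1)t\lfloor\ell^{\kappa}\rfloor)\bigr)>0$, while $F=\log G_{\kappa,m}$ is decreasing with $F(0^{+})=+\infty$ and $F(+\infty)=0$, so $F'$ increases strictly from $-\infty$ to $0$ and $F'(y)=-n$ has a unique positive root.

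The real obstacle lies outside this section: once conditions~(\ref{1}) and~(\ref{2}) are in hand, everything here is a black‑box invocation of \cite{MR0067913, MR3330365}. Of the two, (\ref{1}) is trivial, so the crux is condition~(\ref{2}) — a lower bound showing $\frac{1}{\log H}\sum_{1\le\ell\le H}\|\lfloor\ell^{\kappa}\rfloor y\|^{2}\to\infty$ uniformly for $y\in[\frac{1}{2\lfloor H^{\kappa}\rfloor},\frac12]$ — whose proof, via van der Corput's inequalities and the equidistribution of $(\{\ell^{\kappa}\})_{\ell}$, is the content of Section~\ref{sec3}. A secondary, purely organizational, point is to match the specific quotient structure $L_{\kappa}(z)-L_{\kappa}((m+1)z)$ and the data of $PS(\kappa)$ to the precise statement of Theorem~14.2 of \cite{MR3330365}; this involves no new idea.
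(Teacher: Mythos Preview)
Your proposal is correct and takes essentially the same approach as the paper: the paper derives Proposition~\ref{thm1} immediately from Theorem~2 of Roth--Szekeres \cite{MR0067913} and Theorem~14.2 of Liardet--Thomas \cite{MR3330365}, once conditions~(\ref{1}) and~(\ref{2}) are in hand, with~(\ref{2}) deferred to Section~\ref{sec3}. You supply a bit more detail than the paper does --- the explicit quotient form of $G_{\kappa,m}$, and the convexity argument for existence and uniqueness of the saddle points --- but the structure of the argument is identical.
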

According to the Mellin transform representation of $L_{\kappa}(x)$ and the analytic properties of $\zeta_{\kappa}(s)$, that is, Proposition \ref{pro1}, as well as some well fact on analytic number theory, we will prove the following proposition.
\begin{proposition}\label{pro3}Let $\varepsilon$ be a sufficiently small positive number. For some real number $\sigma_{\kappa}\in(0,1]$ which is in Proposition \ref{pro1} and each integer $j\ge 0$, as $x\rrw 0^+$
\begin{align*}
\frac{\, d^j }{\,d x^j}L_{\kappa}(x)=&\frac{\, d^j }{\,d x^j}\left(\sum_{0\le h<\alpha }\frac{\Gamma(\alpha+1)\zeta(\alpha-h+1)}{(h+1)!x^{\alpha-h}}-\zeta_{\kappa}(0)\log x+\zeta_{\kappa}'(0)\right)+O(x^{\sigma_\kappa-\varepsilon-j}).
\end{align*}
\end{proposition}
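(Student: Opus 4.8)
The plan is to start from the Mellin–Barnes representation of $L_\kappa(x)$ and shift the contour leftward, picking up residues at the poles produced by Proposition \ref{pro1} and at the poles of the Gamma–zeta factors. Recall that $L_\kappa(x)=\sum_{\ell\ge1}\sum_{r\ge1}\frac{1}{r}e^{-rx\lfloor\ell^\kappa\rfloor}$. Using the standard identity $e^{-u}=\frac{1}{2\pi\ri}\int_{(c)}\Gamma(s)u^{-s}\,ds$ for $c>0$ and $u>0$, and summing, I would obtain, for any $c>\max(1,\alpha)$,
\begin{equation*}
L_\kappa(x)=\frac{1}{2\pi\ri}\int_{(c)}\Gamma(s)\zeta(s+1)\zeta_\kappa(s)x^{-s}\,ds,
\end{equation*}
valid as $x\to0^+$; the interchange of sum and integral is justified by absolute convergence in the region $\Re(s)=c$. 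Then I would move the line of integration to $\Re(s)=-\sigma_\kappa+\varepsilon$, where Proposition \ref{pro1} guarantees that $\zeta_\kappa(s)$ is holomorphic except for the simple poles at $s=\alpha-h$, $0\le h\le\lceil\alpha\rceil-1$, together with a possible pole contributed by $\zeta(s+1)$ at $s=0$ and by $\Gamma(s)$ at $s=0$. The residue calculation splits into: (i) the pole at $s=\alpha-h$ for $0\le h<\alpha$, where $\zeta_\kappa$ has residue $\frac{\Gamma(\alpha+1)}{(h+1)!\Gamma(\alpha-h)}$, contributing (after multiplying by $\Gamma(\alpha-h)\zeta(\alpha-h+1)$) exactly the term $\frac{\Gamma(\alpha+1)\zeta(\alpha-h+1)}{(h+1)!}x^{-(\alpha-h)}$; and (ii) the pole at $s=0$, which is a double pole coming from $\Gamma(s)$ (simple) times $\zeta(s+1)$ (simple) — here the Laurent expansions $\Gamma(s)=\frac1s-\gamma+\cdots$, $\zeta(s+1)=\frac1s+\gamma+\cdots$, $x^{-s}=1-s\log x+\cdots$, and $\zeta_\kappa(s)=\zeta_\kappa(0)+\zeta_\kappa'(0)s+\cdots$ combine to give the residue $-\zeta_\kappa(0)\log x+\zeta_\kappa'(0)$, precisely matching the claimed main term. (One must check that $s=\alpha$ and $s=0$ do not coincide, i.e. $\alpha\notin\{0\}$, which holds since $\kappa\in\rb_+$; and when $\alpha\in\nb$ the pole at $s=\alpha-h$ for the value $h$ with $\alpha-h=0$ must be reconciled — but then $\zeta_\kappa(0)$ is finite by Proposition \ref{pro1}, so $\zeta_\kappa$ contributes no pole at $s=0$ and one only has the double pole from $\Gamma\zeta$, consistent with the formula.)

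Next I would bound the shifted integral $\frac{1}{2\pi\ri}\int_{(-\sigma_\kappa+\varepsilon)}\Gamma(s)\zeta(s+1)\zeta_\kappa(s)x^{-s}\,ds$. On this vertical line, $x^{-s}=x^{\sigma_\kappa-\varepsilon}\cdot x^{-\ri\Im(s)}$ has modulus $x^{\sigma_\kappa-\varepsilon}$; the factor $\Gamma(s)$ decays exponentially in $|\Im(s)|$ like $e^{-\frac\pi2|\Im(s)|}|\Im(s)|^{-\sigma_\kappa+\varepsilon-1/2}$ by Stirling; $\zeta(s+1)$ grows at most polynomially on $\Re(s+1)=1-\sigma_\kappa+\varepsilon$; and the polynomial bound $\zeta_\kappa(s)\ll_\varepsilon|s|(|s|+1)^{\lfloor\alpha\rfloor+1}$ from Proposition \ref{pro1} handles the Piatetski-Shapiro factor. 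The exponential decay of $\Gamma$ dominates all polynomial factors, so the integral is $O(x^{\sigma_\kappa-\varepsilon})$, which gives the $j=0$ case.

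For the derivative statement, I would either differentiate the Mellin representation under the integral sign — each application of $\frac{d}{dx}$ brings down a factor $-s/x$, i.e. replaces the integrand by $(-1)^j s(s+1)\cdots(s+j-1)x^{-s-j}$ times the same $\Gamma\zeta\zeta_\kappa$ (this is legitimate since the differentiated integrand still has $\Gamma$-driven exponential decay on the contour) — and then repeat the residue/tail analysis, obtaining the claimed main term differentiated termwise plus an error $O(x^{\sigma_\kappa-\varepsilon-j})$; the extra polynomial factor $s(s+1)\cdots(s+j-1)$ is again swallowed by the exponential decay of $\Gamma$. Alternatively, one notes that an asymptotic expansion of a function that is analytic in $x$ on a sector, with power-type and logarithmic terms, may be differentiated term by term with the stated error; but I prefer the direct contour argument since it keeps uniformity transparent. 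The main obstacle is bookkeeping at $s=0$: getting the order of the pole right in all cases ($\alpha\in\nb$ versus $\alpha\notin\nb$) and checking that the coefficient of $\log x$ is $-\zeta_\kappa(0)$ and the constant is $\zeta_\kappa'(0)$ requires carefully multiplying three Laurent series; this is where the precise values of $\zeta_\kappa(0)$ and $\zeta_\kappa'(0)$ from Proposition \ref{pro1} get used. A secondary subtlety is ensuring the polynomial growth bound on $\zeta_\kappa$ is strong enough to justify moving the contour across the strip and, after multiplying by $s(s+1)\cdots(s+j-1)$, still permits termwise differentiation — but since $\Gamma$ decays faster than any polynomial, this causes no real difficulty.
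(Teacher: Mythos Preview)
Your proposal is correct and follows essentially the same route as the paper: the Mellin representation $L_\kappa(x)=\frac{1}{2\pi\ri}\int_{(c)}\Gamma(s)\zeta(s+1)\zeta_\kappa(s)x^{-s}\,ds$, contour shift to $\Re(s)=\varepsilon-\sigma_\kappa$ with residues picked up at $s=\alpha-h$ and at the double pole $s=0$, and the tail bounded via Stirling plus the polynomial bounds on $\zeta$ and $\zeta_\kappa$. The only cosmetic difference is that the paper absorbs your Pochhammer factor $s(s+1)\cdots(s+j-1)$ into $\Gamma$ by writing the differentiated integral directly as $\frac{1}{2\pi\ri}\int_{(c)}\Gamma(s+j)\zeta(s+1)\zeta_\kappa(s)z^{-s-j}\,ds$, which makes the $j\ge1$ residue at $s=0$ (now simple, since $\Gamma(s+j)$ is regular there) particularly transparent.
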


\begin{proof}By the Mellin's inversion formula, for any $c>\alpha$ we have
\begin{align*}
L_{\kappa}(z)&=\sum_{\ell\ge 1}\sum_{n\ge 1}\frac{e^{-zn \lfloor \ell^{\kappa}\rfloor}}{n}\\
&=\frac{1}{2\pi\ri}\int_{c-\ri\infty}^{c+\ri\infty}\Gamma(s)\zeta(s+1)\zeta_{\kappa}(s)z^{-s}\,ds.
\end{align*}
Therefore, for each integer $j\ge 0$,
\begin{align}\label{eqIg2}
(-1)^jL_{\kappa}^{(j)}(z)&=\frac{1}{2\pi\ri}\int_{c-\ri\infty}^{c+\ri\infty}\Gamma(s+j)\zeta(s+1)\zeta_{\kappa}(s)z^{-s-j}\,ds.
\end{align}
It is well known (see \cite[p.38, p.92]{MR0364103}) that fixed any $a,b\in\rb$, for all $\sigma\in[a, b]$ and any $|t|\ge 1$,
$$\Gamma(\sigma+\ri t)\ll |t|^{\sigma-1/2}\exp\left(-\frac{\pi}{2}|t|\right)$$
and
$$
\zeta(\sigma+\ri t)\ll |t|^{|\sigma|+1/2}.
$$
From the estimation of $\zeta_{\kappa}(s)$ in Proposition \ref{pro1} we have $$\Gamma(s)\zeta(s+1)\zeta_{\kappa}(s)\ll_\varepsilon |t|^{O(1)}\exp\left(-\frac{\pi}{2}|t|\right),$$
for $t\in \rb\setminus (-1,1)$ and $\sigma\ge\varepsilon-\sigma_{\kappa}$. Thus, by the residue theorem, we can move the line of integration \eqref{eqIg2} to the $\Re(s)=\varepsilon-\sigma_\kappa$ and taking into account the possible poles on $\Re(s)>-\sigma_\kappa$ of $\Gamma(s)\zeta(s+1)\zeta_{\kappa}(s)z^{-s-j}$, we obtain
\begin{align*}
(-1)^jL_{\kappa}^{(j)}(z)&=\res_{\Re(s)>-\sigma_\kappa}\left(\Gamma(s+j)\zeta(s+1)\zeta_{\kappa}(s)z^{-s-j}\right)
+O_{\varepsilon}\left(|z|^{-j+\sigma_\kappa-\varepsilon}\right).
\end{align*}
Note that the only poles of gamma function $\Gamma(s)$ are at $s=-k~(k\in\zb_{\ge 0})$,
and all are simple; $s=1$ is the only pole of $\zeta(s)$ and is simple; from Proposition \ref{pro1}, all poles of $\zeta_\kappa(s)$ lies on $\Re(s)>-\sigma_{\kappa}$ are simple listed as follows
$$\alpha, \alpha-1, \ldots, \alpha+1-\lceil \alpha\rceil,$$
and for all integer $h$ with $0\le h<\alpha$,
$$\res_{s=\alpha-h}\left(\zeta_\kappa(s)\right)=\frac{\Gamma(\alpha+1)}{(h+1)!\Gamma(\alpha-h)}.$$
Thus for $j=0$, we have
\begin{align*}
\res_{\Re(s)>-\sigma_\kappa}\left(\frac{\Gamma(s)\zeta(s+1)\zeta_{\kappa}(s)}{z^{s}}\right)=&\res_{s=0}\left(\Gamma(s)\zeta(s+1)\zeta_{\kappa}(s)z^{-s}\right)\\
&+\sum_{0\le h<\alpha}\res_{s=\alpha-h}\left(\Gamma(s)\zeta(s+1)\zeta_{\kappa}(s)z^{-s}\right)\\
=&\zeta_{\kappa}'(0)-\zeta_{\kappa}(0)\log z+\sum_{0\le h<\alpha}\frac{\Gamma(\alpha+1)\zeta(\alpha-h+1)}{(h+1)!z^{\alpha-h}},
\end{align*}
and for integer $j\ge 1$, we have
\begin{align*}
\res_{\Re(s)>-\sigma_\kappa}\left(\frac{\Gamma(s+j)\zeta(s+1)\zeta_{\kappa}(s)}{z^{s+j}}\right)=&\frac{\Gamma(j)\zeta_{\kappa}(0)}{z^j}\\
&+\sum_{0\le h<\alpha}\frac{\Gamma(\alpha+1)\Gamma(\alpha-h+j)\zeta(\alpha-h+1)}{(h+1)!\Gamma(\alpha-h)z^{\alpha-h+j}}.
\end{align*}
This completes the proof.
\end{proof}

Under Proposition \ref{thm1}, the proof of Theorem \ref{mth1} immediately follows from the sharp asymptotics of $L_{\kappa}(z)$ stated in  Proposition \ref{pro3} and the following Lemma \ref{lm}.
\begin{lemma}\label{lm}Let $a,b\in\rb$ and let $(c_h)_{h\ge0}$ be a sequence of real numbers which has finite support. Let $\alpha, c_0\in\rb_+$ and $\sigma\in(0,1]$ and  real function $L(t)$ satisfy
\begin{equation}\label{eqs}
\frac{\, d^j }{\,d t^j} L(t)=\frac{\, d^j }{\,d t^j}\bigg(a-b\log t+\sum_{h\ge 0}c_ht^{h-\alpha}\bigg)+O(t^{\sigma-j}),\; j=0,1,2
\end{equation}
as $t\rrw 0^+$. If $u>0$ solves the equation~$L'(u)+n=0$, then as $n\rrw +\infty$ there exist computable real constants $\lambda_h$ such that
\begin{equation*}
\frac{e^{L(u)+nu}}{\sqrt{2\pi L''(u)}}= \frac{\left(\frac{c_0\alpha}{n}\right)^{\frac{1}{2}+\frac{1/2-b}{\alpha+1}}e^{a}}{\sqrt{2\pi c_0\alpha(\alpha+1)}}\exp\left(\sum_{0\le h\le \alpha}\lambda_h\left(\frac{n}{c_0\alpha}\right)^{\frac{\alpha-h}{\alpha+1}}+O\bigg(n^{-\frac{\min(1-\{\alpha\},\sigma, \alpha)}{1+\alpha}}\bigg)\right).
\end{equation*}
In particular,
$$
(\lambda_0, \lambda_2)=\left(c_0(\alpha+1), c_2-\frac{(\alpha-1)^2\lambda_1^2}{2\alpha\lambda_0}\right)
$$
and
$$
(\lambda_1, \lambda_3)=\left(c_1, c_3-\frac{(\alpha-1)(\alpha-2)\lambda_1\lambda_2}{\alpha\lambda_0}-\frac{(\alpha-4)(\alpha-1)^3\lambda_1^3}{6\alpha^2\lambda_0^2}\right).
$$
\end{lemma}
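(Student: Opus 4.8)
The plan is to solve the saddle-point equation $L'(u)+n=0$ asymptotically, substitute the resulting expansion of $u$ into $L(u)+nu$, and track all terms of size at least $n^{-\min(1-\{\alpha\},\sigma,\alpha)/(1+\alpha)}$; the normalization factor comes from $L''(u)$. First I would record the derivatives of the model function: writing $M(t)=a-b\log t+\sum_{h\ge0}c_ht^{h-\alpha}$, we have $M'(t)=-b/t+\sum_h(h-\alpha)c_ht^{h-\alpha-1}$, so the dominant term of $L'(t)$ as $t\to0^+$ is $-\alpha c_0 t^{-\alpha-1}$. Setting this equal to $-n$ gives the leading behaviour $u\asymp(c_0\alpha/n)^{1/(\alpha+1)}$; it is convenient to set $X:=(n/(c_0\alpha))^{1/(\alpha+1)}$ so that $u=X^{-1}(1+o(1))$ and the target exponent $\sum_{0\le h\le\alpha}\lambda_hX^{\alpha-h}$ is a descending Puiseux-type polynomial in $X$.

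Next I would run the bootstrap. Assume $u=X^{-1}\sum_{k\ge0}a_kX^{-k}$ with $a_0=1$ and undetermined coefficients $a_k$ (only finitely many matter because $(c_h)$ has finite support and the error term in \eqref{eqs} with $j=1$ contributes $O(u^{\sigma-1})=O(X^{1-\sigma})$, which after one more integration-type step is absorbed into the stated error). Plugging this ansatz into $L'(u)+n=0$, expanding each $u^{h-\alpha-1}$ by the binomial series, and matching successive powers of $X$ determines $a_1,a_2,\dots$ uniquely in terms of $\alpha$ and the $c_h$. Then I substitute into $\Phi(X):=L(u)+nu$. Using $L(u)=M(u)+O(u^\sigma)=a-b\log u+\sum_hc_hu^{h-\alpha}+O(X^{-\sigma})$ and $nu=c_0\alpha X^{\alpha+1}u$, collect terms by power of $X$; the $\log u$ term produces $-b\log u=b\log X+O(X^{-1})$, which is exactly what converts the prefactor $(c_0\alpha/n)^{1/2}$ in the Gaussian into $(c_0\alpha/n)^{1/2+(1/2-b)/(\alpha+1)}$ after combining with $L''(u)$. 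Reading off the coefficients of $X^{\alpha},X^{\alpha-1},\dots$ gives the $\lambda_h$; in particular the leading term works out to $\lambda_0=c_0(\alpha+1)$ (the factor $\alpha+1$ being the familiar gain from the saddle point, since $c_0X^\alpha+c_0\alpha X^\alpha=c_0(\alpha+1)X^\alpha$), $\lambda_1=c_1$, and the higher $\lambda_h$ pick up the stated quadratic/cubic corrections in $\lambda_1,\lambda_2$ coming from expanding $u^{h-\alpha}$ to second and third order. The explicit formulas for $(\lambda_0,\lambda_1,\lambda_2,\lambda_3)$ are then a finite (if tedious) computation, best done with a computer algebra system as the paper indicates.

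Finally the normalization: from \eqref{eqs} with $j=2$, $L''(u)=\alpha(\alpha+1)c_0u^{-\alpha-2}(1+O(u))=\alpha(\alpha+1)c_0X^{\alpha+2}(1+O(X^{-1}))$, so $\sqrt{2\pi L''(u)}=\sqrt{2\pi\alpha(\alpha+1)c_0}\,X^{(\alpha+2)/2}(1+O(X^{-1}))$. Combining $e^{L(u)+nu}$ with this denominator, writing everything back in terms of $n$ via $X^{-1}=(c_0\alpha/n)^{1/(\alpha+1)}$, and checking that $X^{-(\alpha+2)/2}\cdot X^{b}=(c_0\alpha/n)^{1/2+(1/2-b)/(\alpha+1)}\cdot(\text{const})$ yields the claimed main term, while all discarded contributions are $O(X^{-\min(1-\{\alpha\},\sigma,\alpha)})$: the $1-\{\alpha\}$ comes from the gap between $\alpha$ and the next admissible exponent $\lceil\alpha\rceil$ at the bottom of the polynomial (the sum runs over $0\le h\le\alpha$, so the first omitted power is $X^{\alpha-\lceil\alpha\rceil}=X^{-(1-\{\alpha\})}$ when $\alpha\notin\nb$), the $\sigma$ from the error in \eqref{eqs}, and the $\alpha$ from the subleading binomial corrections to $nu$. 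The main obstacle is purely bookkeeping: one must be careful that the $\log$-term, the saddle-point gain, and the Gaussian prefactor combine into precisely the exponent $\tfrac12+\tfrac{1/2-b}{\alpha+1}$, and that no term of the claimed size is dropped when $\alpha$ is small (so that $X^{-\alpha}$ rather than $X^{-1}$ is the binding error) or when $\{\alpha\}$ is close to $0$. No genuinely hard analysis is involved — the estimates on $L$ and its derivatives are handed to us by \eqref{eqs}, and everything reduces to formal manipulation of Puiseux series plus uniform control of remainders.
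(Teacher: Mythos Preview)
Your proposal is correct and follows essentially the same approach as the paper: solve the saddle equation $L'(u)+n=0$ asymptotically as a power series, substitute into $L(u)+nu$, and read off the Gaussian prefactor from $L''(u)$. The only cosmetic difference is that the paper packages the series inversion via the B\"urmann (Lagrange) theorem applied to an auxiliary variable $\hat t_n=u\bigl(1+uP(u)\bigr)^{-1/(\alpha+1)}$ (which strips off the $b/u$ term so that the relation to be inverted is purely algebraic) rather than by a direct bootstrap with undetermined coefficients, but the two procedures are equivalent and yield the same $\lambda_h$.
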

\begin{proof}
Let $n\rrw +\infty$. From  $n+L'(u)=0$ and $c_0>0$, it is clear that $u\rrw 0^+$ and
$$
n=\sum_{h\ge 0}\frac{(\alpha-h)c_h}{u^{\alpha+1-h}}+\frac{b}{u}+O\left(u^{-1+\sigma}\right).
$$
This means $u\sim (\alpha c_0/n)^{\frac{1}{1+\alpha}}$. By \eqref{eqs} it implies $L''(u)=c_0\alpha(\alpha+1)u^{-\alpha-2}(1+O(u))$. By further reduction formula we obtain
\begin{equation}\label{eqp}
\frac{e^{L(u)+nu}}{\sqrt{2\pi L''(u)}}=u^{1+\alpha/2-b}(1+O(u))\frac{\exp\left(b+\sum_{h\ge 0}c_h(\alpha-h+1)u^{h-\alpha}\right)}{e^{-a}\sqrt{2\pi c_0\alpha(\alpha+1) }}.
\end{equation}
Let $P(u)=\sum_{h\ge 1}\frac{(\alpha-h)c_h}{\alpha c_0}u^{h-1}$ and $t_n=\left({\alpha c_0}/{n}\right)^{\frac{1}{1+\alpha}}$. Using generalized binomial theorem we obtain
\begin{align*}
t_n=&u
\left(1+uP(u)+{bu^{\alpha}}/{\alpha c_0}+O\left(u^{\alpha+\sigma}\right)\right)^{-\frac{1}{1+\alpha}}\nonumber\\
&=u
\left(1+uP(u)\right)^{-\frac{1}{1+\alpha}}-\frac{u^{\alpha+1}}{c_0\alpha(\alpha+1)}\left(b+O\left(u^{\min(\alpha, \sigma)}\right)\right).
\end{align*}
This means that
\begin{align}\label{eqmm0}
u=t_n\big(1+O\big(t_n^{\min(1,\alpha)}\big)\big).
\end{align}
Denoting by
$
\hat{t}_n=u\left(1+uP(u)\right)^{-\frac{1}{1+\alpha}},
$
then
\begin{align}\label{eqtn}
\hat{t}_n&=t_n+u^{\alpha+1}\left(b+O\left(u^{\min(\alpha,\sigma)}\right)\right)/c_0\alpha(\alpha+1)\nonumber\\
&=t_n+bt_n^{\alpha+1}/c_0\alpha(\alpha+1)+O\left(t_n^{\alpha+1+\min(\alpha,\sigma)}\right).
\end{align}
Since $(c_h)_h$ have finite support, $P(u)$ is a polynomial and hence $(1+zP(z))^{1/(1+\alpha)}$
is analytic at $z=0$. Recall that the B\"{u}rmann Theorem (see \cite[p.~129]{MR1424469}) states that: Suppose that $f(z)$ is analytic at $z=0$ and $f(0)=0$, if $t=uf(u)^{-1}$ then for $t\rrw 0$
$$u=\sum_{j\ge 0}\frac{t^{j+1}}{(j+1) !}\frac{\,d^j}{\,dz^j}\bigg|_{z=0}\big( f(z)^{j+1}\big).$$
Therefore, as $\hat{t}_n\rrw 0$,
\begin{equation*}
u=\sum_{j\ge 0}\frac{\hat{t}_n^{j+1}}{(j+1) !}\frac{\,d^j}{\,dz^j}\bigg|_{z=0}\big( (1+zP(z))^{\frac{j+1}{\alpha+1}}\big).
\end{equation*}
Then, for each integer $h\ge 0$ it is clear that $(\hat{t}_nu)^{h-\alpha}$ is analytic at $\hat{t}_n=0$. Hence
%With the help of {\bf Mathematica}, we have
%$$(\hat{c}_1, \hat{c}_2)=\left(\frac{(\alpha-1) c_1}{\alpha (\alpha+1) c_0}, \frac{(\alpha-2) c_2}{\alpha (\alpha+1) c_0}-\frac{(\alpha-2)\hat{c}_1^2}{2}\right)$$
%and
%$$(\hat{c}_0, \hat{c}_3)=\left(1, \frac{(\alpha-3) c_3}{\alpha (\alpha+1) c_0}-\frac{(\alpha-3)(\alpha-2) c_2\hat{c}_1}{\alpha (\alpha+1) c_0}+\frac{(\alpha-3)(\alpha-1)\hat{c}_1^3}{3}\right).$$
%Since
%$$u^{h-\alpha}=\hat{t}_n^{h-\alpha}\bigg(1+\sum_{j\ge 1}\hat{c}_j\hat{t}_n^{j}\bigg)^{h-\alpha}$$
the finite sum function $\sum_{h\ge 0}c_h(\alpha-h+1)u^{h-\alpha}$ is analytic at $\hat{t}_n=0$. So, as $\hat{t}_n\rrw 0$,
\begin{align}\label{eqtn1}
\sum_{h\ge 0}&c_h(\alpha-h+1)u^{h-\alpha}= \hat{t}_n^{-\alpha}\sum_{j\ge 0}\lambda_j\hat{t}_n^{j}
\end{align}
holds for a computable sequence $(\lambda_j)_j$ of real numbers which can be obtained with the help of {\bf Mathematica}.
Inserting \eqref{eqtn} and using the generalized binomial theorem, we have
$$\hat{t}_{n}^{j-\alpha}=t_n^{j-\alpha}\left(1+(j-\alpha)bt_n^{\alpha}/c_0\alpha(\alpha+1)+O\left(t_n^{\alpha+\min(\alpha,\sigma)}\right)\right).$$
Inserting \eqref{eqtn1} we have
\begin{align}\label{eqmm1}
\sum_{h\ge 0}&c_h(\alpha-h+1)u^{h-\alpha}= t_n^{-\alpha}\sum_{0\le j\le \alpha}\lambda_jt_n^{j}-b+O\left(t_n^{\lfloor\alpha\rfloor+1-\alpha}+t_n^{\min(\alpha,\sigma)}\right)
\end{align}
Substituting \eqref{eqmm1} and $t_n=(c_0\alpha/n)^{\frac{1}{\alpha+1}}$ to \eqref{eqp}, we complete the proof.
\end{proof}

\section{The check of the condition (\ref{2})}\label{sec3}
For the case of $\kappa\in\nb$, Roth and Szekeres \cite[P.241]{MR0067913} have mentioned that for any integer $\kappa>0$,  $(\ell^\kappa)_{\ell\in\nb}$ satisfies the condition (\ref{2}). Hence we only verify the case of $\kappa\in\rb_+\setminus \nb$.  We begin with the following lemmas.
\begin{lemma}\label{lem3.1} For $\kappa\in\rb_+\setminus \nb$ and $|y|\le 1/2$, we have
$$
\left|\sum_{H/2\le \ell<H}e^{2\pi\ri y\lfloor\ell^{\kappa}\rfloor}\right|\le \bigg|\sum_{H/2\le \ell<H}e^{2\pi\ri y\ell^{\kappa}}\bigg|+\frac{\pi H}{8}+o(H),
$$as $H\rrw +\infty$.
\end{lemma}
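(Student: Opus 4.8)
The plan is to compare the exponential sum over the Piatetski-Shapiro values $\lfloor \ell^\kappa\rfloor$ with the exponential sum over the smooth values $\ell^\kappa$, controlling the error coming from the fractional parts $\{\ell^\kappa\}$. Write $\lfloor \ell^\kappa\rfloor = \ell^\kappa - \{\ell^\kappa\}$, so that
\begin{equation*}
e^{2\pi\ri y\lfloor\ell^{\kappa}\rfloor} = e^{2\pi\ri y\ell^{\kappa}}\cdot e^{-2\pi\ri y\{\ell^{\kappa}\}}.
\end{equation*}
The idea is that $e^{-2\pi\ri y\{\ell^\kappa\}}$ can be replaced, on average, by its ``mean value'' with respect to the equidistribution of $(\{\ell^\kappa\})_\ell$ in $[0,1)$, and the cost of this replacement is controlled. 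First I would telescope: for each $\ell$,
\begin{equation*}
e^{2\pi\ri y\lfloor\ell^{\kappa}\rfloor} - c(y)\,e^{2\pi\ri y\ell^{\kappa}} = e^{2\pi\ri y\ell^{\kappa}}\bigl(e^{-2\pi\ri y\{\ell^{\kappa}\}} - c(y)\bigr),
\end{equation*}
where $c(y) = \int_0^1 e^{-2\pi\ri y t}\,dt$. Since $|y|\le 1/2$ we have $|e^{-2\pi\ri y\{\ell^\kappa\}} - c(y)| \le 2$ trivially, but that is too weak; the point is that $\sum_{H/2\le \ell<H}\bigl(e^{-2\pi\ri y\{\ell^{\kappa}\}} - c(y)\bigr)$ is small by equidistribution, which then needs to be combined with the oscillation of $e^{2\pi\ri y\ell^\kappa}$ via summation by parts (Abel summation) over $\ell$.

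Alternatively — and I expect this to be the cleaner route — I would use the simpler deterministic bound that avoids any cancellation in $e^{2\pi\ri y\ell^\kappa}$: split the range $H/2\le \ell < H$ according to whether $\{\ell^\kappa\}$ is close to a bad point. More precisely, the crude estimate $|e^{2\pi\ri y\lfloor\ell^\kappa\rfloor} - e^{2\pi\ri y\ell^\kappa}| = |1 - e^{2\pi\ri y\{\ell^\kappa\}}| = 2|\sin(\pi y\{\ell^\kappa\})| \le 2\pi |y|\{\ell^\kappa\} \le \pi\{\ell^\kappa\}$, and then one needs $\sum_{H/2\le\ell<H}\{\ell^\kappa\}$. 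By Weyl equidistribution of $(\{\ell^\kappa\})_{\ell\ge1}$ for $\kappa\in\rb_+\setminus\nb$ (which follows from van der Corput's theorem, cited in the excerpt as \cite{MR1544956}), we have $\sum_{H/2\le\ell<H}\{\ell^\kappa\} = (H/2)\cdot\tfrac12 + o(H) = H/4 + o(H)$ as $H\rrw+\infty$. Therefore
\begin{equation*}
\left|\sum_{H/2\le\ell<H}e^{2\pi\ri y\lfloor\ell^\kappa\rfloor}\right| \le \left|\sum_{H/2\le\ell<H}e^{2\pi\ri y\ell^\kappa}\right| + \sum_{H/2\le\ell<H}\bigl|e^{2\pi\ri y\lfloor\ell^\kappa\rfloor} - e^{2\pi\ri y\ell^\kappa}\bigr| \le \left|\sum_{H/2\le\ell<H}e^{2\pi\ri y\ell^\kappa}\right| + \frac{\pi H}{8} + o(H),
\end{equation*}
using $\pi\cdot(H/4+o(H)) = \pi H/4 + o(H)$; wait — this gives $\pi H/4$, not $\pi H/8$, so I would need to track the constant more carefully: the bound $2|\sin(\pi y u)|\le \pi|u|$ for $|y|\le 1/2$ and $u\in[0,1)$ combined with $\sum\{\ell^\kappa\} \sim H/4$ gives $\pi H/4 \cdot \tfrac12$... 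I would reexamine whether a slightly sharper pointwise inequality, or restricting attention to $|y|\le 1/2$ more carefully, or a second-moment refinement, recovers the claimed $\pi H/8$; most likely the intended argument splits off the main term of $\sum\{\ell^\kappa\}$ and uses $2\sin(\pi y \{\ell^\kappa\}) \le 2\pi \cdot \tfrac12 \cdot \tfrac12 = \pi/2$ on average over a density-one set, giving $(\pi/2)(H/2)/2$-type bookkeeping.

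The main obstacle is precisely this constant-chasing: getting exactly $\pi H/8$ rather than $\pi H/4$ requires either (a) using the equidistribution of $(\{\ell^\kappa\})$ at the level of the average $\int_0^1 |1-e^{2\pi\ri y t}|\,dt$ rather than the pointwise bound, or (b) exploiting that $|y|\le 1/2$ forces $\pi y \{\ell^\kappa\} \le \pi/2$ so $2|\sin(\pi y\{\ell^\kappa\})| \le 2\pi y \{\ell^\kappa\}$ and then averaging $\{\ell^\kappa\}$ against $y$. I would carry out the proof in this order: (1) write the pointwise identity for the difference via $\{\ell^\kappa\}$; (2) invoke the equidistribution of $(\{\ell^\kappa\})_{\ell\ge1}$ modulo $1$ (from van der Corput, valid since $\kappa\notin\nb$) to evaluate $\sum_{H/2\le\ell<H}\{\ell^\kappa\} \sim H/4$; (3) bound $2|\sin(\pi y t)|$ for $t\in[0,1)$, $|y|\le1/2$ appropriately and combine; (4) collect the error terms into $o(H)$ and the main term into $\pi H/8$. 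Steps (1), (2), (4) are routine; step (3), the extraction of the sharp constant, is the only delicate point and I would double-check it against the intended downstream use in verifying condition (\ref{2}).
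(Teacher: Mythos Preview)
Your overall strategy---triangle inequality plus equidistribution of $(\{\ell^\kappa\})_\ell$---is exactly the paper's, and you have correctly diagnosed that your bound yields $\pi H/4$ rather than $\pi H/8$. The missing trick, which you circle around but do not land on, is a \emph{centering}: instead of comparing $e^{2\pi\ri y\lfloor\ell^\kappa\rfloor}$ with $e^{2\pi\ri y\ell^\kappa}$, compare it with $e^{2\pi\ri y(\ell^\kappa-1/2)}$. Since $\bigl|\sum e^{2\pi\ri y(\ell^\kappa-1/2)}\bigr|=\bigl|\sum e^{2\pi\ri y\ell^\kappa}\bigr|$ (the factor $e^{-\pi\ri y}$ has modulus one), the first term is unchanged, while the pointwise error becomes
\[
\bigl|e^{2\pi\ri y(1/2-\{\ell^\kappa\})}-1\bigr|\le 2\pi|y|\,\bigl|1/2-\{\ell^\kappa\}\bigr|\le \pi\,\bigl|1/2-\{\ell^\kappa\}\bigr|.
\]
Now equidistribution gives $\sum_{H/2\le\ell<H}|1/2-\{\ell^\kappa\}|\sim (H/2)\int_0^1|1/2-x|\,dx=(H/2)\cdot(1/4)=H/8$, and this is exactly the factor of two you were missing: $\int_0^1|1/2-x|\,dx=1/4$ versus $\int_0^1 x\,dx=1/2$.

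This constant is not cosmetic. In the downstream verification of condition~(\ref{2}) one arrives at
\[
\frac{H}{4\pi^2}-\frac{1}{2\pi^2}\cdot\frac{\pi H}{8}+o(H)=\frac{H}{4\pi^2}\Bigl(1-\frac{\pi}{4}\Bigr)+o(H),
\]
which is positive because $1-\pi/4>0$. With your $\pi H/4$ the bracket becomes $1-\pi/2<0$ and the argument collapses. Neither of your proposed fixes (a) or (b) recovers $\pi H/8$: for (a), the integral $\int_0^1|1-e^{2\pi\ri yt}|\,dt$ at $y=1/2$ equals $4/\pi>\pi/4$; for (b), averaging $\{\ell^\kappa\}$ still gives $1/2$. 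The centering is the point.
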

\begin{proof}
By  the triangle inequality, we have
\begin{align*}
\bigg|\sum_{H/2\le \ell<H}e^{2\pi\ri y\lfloor\ell^{\kappa}\rfloor}\bigg|&\le \bigg|\sum_{H/2\le
\ell<H}e^{2\pi\ri y(\ell^{\kappa}-\frac 12)}\bigg|+\bigg|\sum_{H/2\le \ell<H}e^{2\pi\ri
y(\ell^{\kappa}-\frac 12)}\left(e^{2\pi\ri y(\frac 12-\{\ell^{\kappa}\})}-1\right)\bigg|\\
&\le \bigg|\sum_{H/2\le \ell<H}e^{2\pi\ri y\ell^{\kappa}}\bigg|+\sum_{H/2\le \ell<H}\bigg|e^{2\pi\ri
y(1/2-\{\ell^{\kappa}\})}-1\bigg|.
\end{align*}
Notice that
\begin{align*}
\bigg|e^{2\pi\ri y(1/2-\{\ell^{\kappa}\})}-1\bigg|&=\bigg|\int_{0}^{2\pi y(1/2-\{\ell^{\kappa}\})}e^{\ri u}\,du\bigg|\\
&\le \bigg|2\pi y(1/2-\{\ell^{\kappa}\})\bigg|\le \pi\left|1/2-\{\ell^{\kappa}\}\right|,
\end{align*}
and $(\{\ell^{\kappa}\})_{\ell\in\nb}$ is uniformly distributed in $(0,1)$ for $\kappa\in\rb_+\setminus \nb$, see for example \cite[p.31, Exercises 3.9]{MR0419394}. It follows that
\begin{align*}
\sum_{H/2\le \ell<H}\bigg|e^{2\pi\ri y(1/2-\{\ell^{\kappa}\})}-1\bigg|&\le \pi\sum_{H/2\le \ell<H}\left|1/2-\{\ell^{\kappa}\}\right|\\
&\sim \frac {\pi H} 2\int_{0}^{1}|1/2-x|\,dx=\frac{\pi H}{8},
\end{align*}
as $H\rrw \infty$. Thus, we complete the proof of Lemma \ref{lem3.1}.
\end{proof}
\begin{lemma}\label{lem3.2} For $\kappa\in\rb_+\setminus \nb$ and $(\log H)^{-2\kappa} \ll y\le 1/2$, we have
$$\sum_{H/2\le \ell<H}e^{2\pi\ri y\ell^{\kappa}}=o(H),$$
as $H\rrw \infty$.
\end{lemma}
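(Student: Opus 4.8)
The plan is to estimate the exponential sum $\sum_{H/2\le \ell<H}e^{2\pi\ri y\ell^{\kappa}}$ by Weyl-type differencing or, more efficiently, by a van der Corput estimate for sums $\sum e(f(\ell))$ where $f(t)=yt^{\kappa}$. The function $f$ has $f''(t)=\kappa(\kappa-1)yt^{\kappa-2}$, which for $\ell\asymp H$ is of size $\asymp y H^{\kappa-2}$. The classical van der Corput second-derivative test (the theorem of van der Corput cited in the excerpt) gives, for a sum over an interval of length $\asymp H$ on which $|f''|\asymp \lambda$,
$$
\sum_{H/2\le \ell<H}e(f(\ell)) \ll H\lambda^{1/2}+\lambda^{-1/2}.
$$
With $\lambda\asymp yH^{\kappa-2}$ this yields a bound $\ll H\cdot(yH^{\kappa-2})^{1/2}+(yH^{\kappa-2})^{-1/2} = y^{1/2}H^{\kappa/2}+y^{-1/2}H^{1-\kappa/2}$. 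First I would check that this already gives $o(H)$ in the regime where $\kappa<2$ and $y$ is not too large; but near $y\asymp 1/2$ and for $\kappa$ in other ranges one needs to be more careful, so the argument should be organized by the size of $\kappa$.

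The key steps, in order: (1) Reduce to the case $\kappa\in(1,2)$ if possible, or else treat general non-integer $\kappa>0$ by applying the van der Corput test to whichever derivative $f^{(k)}$ with $k=\lceil\kappa\rceil$ has constant sign and controlled size on $[H/2,H)$; for non-integer $\kappa$ no derivative of $f$ vanishes identically, so some $k$-th derivative test ($k=2$ if $\kappa\notin\{1,2\}$ suffices for $\kappa<2$, otherwise $k=\lfloor\kappa\rfloor+1$) applies cleanly. (2) Split the range $(\log H)^{-2\kappa}\ll y\le 1/2$ according to whether the resulting main term $H\lambda^{1/2}$ or error term $\lambda^{-1/2}$ dominates, and verify in each subrange that the bound is $o(H)$. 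The lower bound $y\gg(\log H)^{-2\kappa}$ is precisely what makes $\lambda^{-1/2}\ll (\log H)^{\kappa}H^{1-\kappa/2}$ (or the analogous higher-derivative expression) negligible against $H$; this is where the stated hypothesis on $y$ gets used. (3) For the upper end $y$ close to $1/2$ (and more generally when $yH^{\kappa-2}$ is not small), one may need the higher-derivative van der Corput estimates (the $AB$-process or Weyl differencing reducing the exponent of $H$), or alternatively invoke the uniform distribution / equidistribution of $(\{y\ell^{\kappa}+\text{lower order}\})$ from the theory already referenced; I would quote the relevant van der Corput inequality from \cite{MR1544956} rather than reprove it.

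The main obstacle will be getting a bound that is uniform in $y$ across the entire range $(\log H)^{-2\kappa}\ll y\le 1/2$ with a single clean estimate: the naive second-derivative test degrades both as $y\to 0$ (error term $\lambda^{-1/2}$ blows up) and, for $\kappa$ close to or above $2$, as $H\to\infty$ (main term $H\lambda^{1/2}$ blows up). I expect to handle this by a dyadic decomposition in $y$ combined with choosing, for each dyadic block $y\asymp Y$, the optimal order $k$ of derivative in the van der Corput test — i.e. the exponent pair machinery — so that in every block the resulting bound is $H^{1-\eta}$ for some $\eta>0$ depending on $\kappa$ but not on the block, hence $o(H)$ after summing over the $O(\log H)$ dyadic blocks. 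The bookkeeping to confirm $\eta>0$ in all cases, especially for large non-integer $\kappa$, is the delicate part; everything else is routine application of the cited results of van der Corput.
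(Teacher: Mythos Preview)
Your approach via van der Corput derivative tests is correct and is essentially what the paper does, but you are overcomplicating it. The dyadic decomposition in $y$ and the per-block optimization of the derivative order $k$ are unnecessary: the single choice $k=\lceil\kappa\rceil$ in the $k$-th derivative test already yields a bound uniform over the whole range $(\log H)^{-2\kappa}\ll y\le 1/2$, so the ``main obstacle'' you anticipate does not materialize (and there is no summing over $y$-blocks---$y$ is a fixed parameter, not a summation variable). Concretely, for $\kappa>1$ the paper applies the $k$-th derivative estimate with $k=\lceil\kappa\rceil\ge 2$ (citing \cite[Theorem~8.20]{MR2061214}) and checks directly that both resulting terms are $o(H)$ uniformly in $y$. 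For $\kappa\in(0,1)$ the paper instead uses a first-derivative argument: since $|f'(t)|=\kappa y t^{\kappa-1}<1$ on $[H/2,H)$, the sum equals $\int_{H/2}^{H}e^{2\pi\ri y t^{\kappa}}\,dt+O(1)$ by \cite[Lemma~8.8]{MR2061214}, and one integration by parts gives $O(y^{-1}H^{1-\kappa})=o(H)$. Your second-derivative test would also handle this range, so this last difference is cosmetic; the paper's split simply keeps the bookkeeping to a minimum and avoids any exponent-pair machinery.
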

\begin{proof}
For the cases of $\kappa\in(0,1]$, it follows from \cite[p.206, Lemma 8.8]{MR2061214} and $(\log H)^{-2\kappa} \ll y\le 1/2$ that
\begin{align*}
\sum_{H/2\le \ell<H}e^{2\pi\ri y\ell^{\kappa}}&=\int_{H/2}^{H}e^{2\pi\ri yt^{\kappa}}dt+O\left(\frac 1 {1-yH^{\kappa-1}}\right)\\
&=\frac{1}{\kappa}\int_{(H/2)^\kappa}^{H^\kappa}e^{2\pi\ri yu}u^{1/\kappa-1}du +O(1)\\
&=\frac 1 {2\pi\ri y\kappa}\int_{(H/2)^\kappa}^{H^\kappa}u^{1/\kappa-1}d e^{2\pi\ri yu}+O(1)\\
&=O\left(y^{-1} H^{1-\kappa}\right)+O(1)=o(H).
\end{align*}
For the cases of $\kappa\in(1,\infty)$, we have $\lceil\kappa\rceil\ge 2$. It follows from \cite[p.213, Theorem 8.20]{MR2061214} and $(\log H)^{-2\kappa} \ll y\le 1/2$ that
\begin{align*}
\sum_{H/2\le \ell<H}e^{2\pi\ri y\ell^{\kappa}}&\ll H^{1-2^{2-k}}(yH^{\kappa-k})^{-\frac{1}{2^k-2}}+H(y H^{\kappa-k})^{\frac{1}{2^k-2}}\\
&=H\left( y H^{\kappa-k+4-8\cdot 2^{-k}}\right)^{-\frac{1}{2^k-2}}+o(H)\\
&\ll H(y H^{1+\{\kappa\}})^{-\frac{1}{2^{k}-2}}+o(H)=o(H).
\end{align*}
Thus, we complete the proof of Lemma \ref{lem3.2}.
\end{proof}

We now verify that the sequence $PS(\kappa)$ with $\kappa\in\rb_+\setminus \nb$ satisfies the condition (\ref{2}).
For $(\log H)^{-2\kappa}\ll y\le 1/2$, using Lemma \ref{lem3.1} and Lemma \ref{lem3.2}, we have
\begin{align*}
\sum_{1\le \ell\le H}\|\lfloor\ell^{\kappa}\rfloor y\|^2&\ge \frac{1}{\pi^2 }\sum_{1\le \ell\le H}\sin^2\left(\pi^2 \lfloor\ell^{\kappa}\rfloor y\right)\\
&\ge \frac{1}{2\pi^2}\Re \sum_{H/2\le \ell\le H}\left(1-e^{2\pi\ri \lfloor\ell^{\kappa}\rfloor y}\right)\\
&\ge \frac{H}{4\pi^2}-\frac{1}{2\pi^2}\left|\sum_{H/2<\ell\le H}e^{2\pi\ri \lfloor\ell^{\kappa}\rfloor y}\right|+O(1)\\
&=\frac{H}{4\pi^2}-\frac{1}{2\pi^2}\cdot\frac{\pi H}{8}+o(H)=\frac{H}{4\pi^2}\left(1-\frac{\pi}{4}\right)+o(H),
\end{align*}
as $H\rrw \infty$. Note that $1-\pi/4>0$, which implies that for $(\log H)^{-2\kappa}\ll y\le 1/2$, one has
\begin{align}\label{eq1}
\frac{1}{\log H} \sum_{1\le \ell\le H}\|\lfloor\ell^{\kappa}\rfloor y\|^2&\gg \frac{H}{\log H}\rrw \infty, \quad H \rrw \infty.
\end{align}
On the other hand, for $(2\lfloor H^{\kappa}\rfloor)^{-1}\le y\le (\log H)^{-2\kappa}$, using the fact that $\| x\|=x$ for $x\in(0,1/2)$, we have the estimate that
\begin{align*}
\frac 1 {\log H} \sum_{1\le \ell\le H}\|\lfloor\ell^{\kappa}\rfloor y\|^2&\gg \frac 1 {\log H} \sum_{1\le \ell\le (2y)^{-1/\kappa}}\ell^{2\kappa} y^2\gg \frac {y^{-1/\kappa}} {\log H} \gg (\log H)\rrw\infty,
\end{align*}
as $H\rrw \infty$. Therefore, combining with \eqref{eq1}, we obtain that the sequence $PS(\kappa)$ with $\kappa\in\rb_+\setminus \nb$ satisfies the condition (\ref{2}).

\section{The proof of Proposition \ref{pro1}}\label{sec4}
In this section, we use a theorem of van der Corput \cite{MR1544956}, the classical Taylor's theorem to study the analytic properties of $\zeta_\kappa(s)$.
\subsection{On a theorem of van der Corput}

Define the Dirichlet series
\begin{align*}
\hat{\zeta}_{\alpha}(s)=\sum_{n\ge 1}\frac{\widetilde{B}_1(n^{\alpha})}{n^s},
\end{align*}
for all $\Re(s)>1$. Recall that $\widetilde{B}_1(x)=\{x\}-1/2$, we have $\hat{\zeta}_{\alpha}(s)=-\zeta(s)/2$ when  $\alpha$ is a positive integer.  For $\alpha\in\rb_+\setminus \nb$, we prove the following lemma.
\begin{lemma}\label{lemma41}Let $\alpha\in\rb_+\setminus \nb$. We have
$$\sum_{1\le n\le N}\widetilde{B}_1(n^{\alpha})\ll N^{1-\sigma_{\alpha}}\log N,\ \text{for}\ N\ge 2,$$
where
$$\sigma_{\alpha}=\max_{m\in\nb_{\ge 2}}\min\left(2^{1-m}, \frac{m-\alpha}{2^m-1}, 2^{1-m}\alpha\right)\in (0,1).$$
\end{lemma}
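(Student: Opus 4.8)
The plan is to estimate the exponential sum $\sum_{N/2 \le n < N} e(y\, n^{\alpha})$ uniformly in $y$ and then recover a bound on the partial sums of $\widetilde{B}_1(n^\alpha)$ via truncated Fourier expansion of the sawtooth function. First, I would recall that for any $M$,
\[
\widetilde{B}_1(t) = -\sum_{1 \le |h| \le M} \frac{e(ht)}{2\pi \ri h} + O\!\left(\min\left(1, \frac{1}{M\|t\|}\right)\right),
\]
(Vaaler's or the classical Vinogradov approximation to the sawtooth). Applying this with $t = n^\alpha$ and summing over a dyadic block $N/2 \le n < N$ reduces the problem to bounding $\sum_{N/2 \le n < N} e(h n^\alpha)$ for $1 \le h \le M$, plus an error term $\sum_{n} \min(1, 1/(M\|n^\alpha\|))$ which, by the uniform distribution (indeed the discrepancy estimate) of $(n^\alpha)$, contributes $\ll N/M + (\text{discrepancy terms})$; choosing $M$ a small power of $N$ makes this acceptable.

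Next, for the main exponential sum I would invoke van der Corput's method, exactly as the theorem of van der Corput cited in the section heading and already used in Lemma~\ref{lem3.2}: for $\phi(t) = h t^\alpha$ on $[N/2, N)$, the $m$-th derivative satisfies $\phi^{(m)}(t) \asymp h N^{\alpha - m}$, so the $m$-th derivative test (Weyl–van der Corput, i.e.\ \cite[Theorem 8.20]{MR2061214}) gives
\[
\sum_{N/2 \le n < N} e(h n^\alpha) \ll N^{1 - 2^{2-m}} (h N^{\alpha - m})^{1/(2^m - 2)} + N (h N^{\alpha - m})^{-1/(2^m-2)}
\]
for each integer $m \ge 2$. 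Summing over $1 \le h \le M$ and dividing by $h$ (from the Fourier coefficients) yields, after optimizing $m$ against $\alpha$, a saving of $N^{\sigma_\alpha}$ where $\sigma_\alpha$ is precisely the $\max_m \min(\,\cdots\,)$ appearing in the statement — the three terms in the $\min$ come from balancing the two van der Corput error terms against the trivial bound for the range of $h$ relevant to each. The extra $\log N$ absorbs the dyadic decomposition $N \to N/2 \to N/4 \to \cdots$ together with the harmonic sum $\sum_{h \le M} 1/h$.

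The main obstacle I anticipate is making the dependence on $h$ genuinely uniform and tracking it through the optimization, so that summing the van der Corput bounds over $h \le M$ (with the $1/h$ weights from the Fourier expansion) does not destroy the power saving: one must check that the exponent of $h$ in each term is small enough that $\sum_{h\le M} h^{\pm 1/(2^m-2)-1}$ stays bounded by $M^{O(1)}$ with $M$ a suitably small power of $N$, and then that the resulting loss $M^{O(1)}$ together with the $N/M$ sawtooth error is dominated by $N^{1-\sigma_\alpha}\log N$ for the stated $\sigma_\alpha$. A secondary technical point is verifying that $\sigma_\alpha \in (0,1)$: positivity is clear since each of $2^{1-m}$, $(m-\alpha)/(2^m-1)$, $2^{1-m}\alpha$ is positive once $m > \alpha$, and the bound $\sigma_\alpha < 1$ follows because $2^{1-m} < 1$ for all $m \ge 2$. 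Once these bookkeeping issues are handled the estimate falls out; I would not expect any genuinely new idea beyond the classical van der Corput machinery.
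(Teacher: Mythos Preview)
Your approach is sound and would work, but it takes a longer route than the paper.  The paper does not pass through the Fourier expansion of $\widetilde{B}_1$ at all: it invokes directly a classical theorem of van der Corput on fractional-part sums (Satz~3 of \cite{MR1544956}, restated as the ``Theorem (van der Corput)'' in \cite{MR43119}), which already bounds $\sum_{X<n\le 2X}\widetilde{B}_1(f(n))$ in terms of the size of $f^{(m)}$.  With $f(x)=x^{\alpha}$ one has $f^{(m)}\asymp X^{\alpha-m}$ on $(X,2X]$, and that theorem yields at once
\[
\sum_{X<n\le 2X}\widetilde{B}_1(n^{\alpha})\ll X\Bigl(X^{-2^{1-m}}\log X+X^{(\alpha-m)/(2^{m}-1)}+(X^{m}\cdot X^{\alpha-m})^{-2^{1-m}}\Bigr);
\]
optimising over integers $m\ge 2$ produces exactly the displayed $\sigma_{\alpha}$, and a dyadic subdivision $N\to N/2\to\cdots$ gives the extra $\log N$.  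That is the whole proof.

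What you outline --- truncate the sawtooth, bound $\sum e(hn^{\alpha})$ by the $m$-th derivative test, sum over $h$ with weights $1/h$, and optimise the truncation $M$ --- is precisely how van der Corput's $\psi$-sum theorem itself is proved, so you are re-deriving the black box rather than citing it.  The gain of your route is self-containment and transparency about where the three quantities in the $\min$ come from; the cost is the bookkeeping you flag (uniformity in $h$, handling the error $\sum_n\min(1,(M\|n^{\alpha}\|)^{-1})$, and matching the exact exponents).  None of that is a genuine obstacle, but be aware that the precise form of the third term depends on the version of the $m$-th derivative estimate you use, so if you carry this out you should check that your optimisation reproduces the stated $\sigma_{\alpha}$ rather than an equivalent positive constant.
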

\begin{proof}
Fixed $\alpha\in\rb_+\setminus \nb$, let $f(x)=x^{\alpha}, x>0$. Then for any given integer $m\ge 2$ and positive number $X\ge 2$ we have $f^{( m )}(x)\asymp X^{\alpha-m}$, for all $x\in(X, 2X]$. Using a theorem of van der Corput, see \cite[Theorem (van der Corput)]{MR43119} or \cite[Satz 3]{MR1544956}, we have
\begin{align*}
\sum_{X<n\le 2X}\widetilde{B}_1(n^{\alpha})&\ll X\left(X^{-2^{1-m}}\log X+X^{\frac{\alpha-m}{2^m-1}}+(X^{m}X^{\alpha-m})^{-2^{1-m}}\right).
\end{align*}
Choosing integer $m\ge 2$ such that the exponent of $X$ above takes the smallest value, we obtain
\begin{align*}
\sum_{X<n\le 2X}\widetilde{B}_1(n^{\alpha})&\ll X^{1-\sigma_{\alpha}}\log X.
\end{align*}
Finally we replace the dyadic segment $X<n\le 2X$ by the whole interval $1\le n\le N$ by subdividing latter, we have
$$\sum_{1\le n\le N}\widetilde{B}_1(n^{\alpha})\ll 1+\sum_{0\le j\le \log_2 N-1}\left|\sum_{2^{-j-1}N< n\le 2^{-j}N} \widetilde{B}_1(n^{\alpha})\right|\ll N^{1-\sigma_{\alpha}}\log N.$$
This completes the proof.
\end{proof}
Using integration by parts for a Riemann-Stieltjes integral, we obtain the following proposition.
\begin{proposition}\label{pro41}
Let $\alpha\in\rb_+\setminus \nb$ and let $\sigma_\alpha(\in(0,1))$ be given by Lemma \ref{lemma41}. Then $\hat{\zeta}_{\alpha}(s)$ can be holomorphic continued analytically to $\Re(s)>1-\sigma_{\alpha}$. Moreover,
$$\hat{\zeta}_{\alpha}(s)\ll_{\varepsilon}1+|s|,$$
for all $\Re(s)\ge\varepsilon+1-\sigma_{\alpha}$ with any sufficiently small $\varepsilon>0$.
\end{proposition}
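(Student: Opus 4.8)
The plan is to derive the analytic continuation and growth bound for $\hat{\zeta}_{\alpha}(s)$ directly from the partial-summation estimate of Lemma \ref{lemma41}, via integration by parts for a Riemann--Stieltjes integral. First I would set $S(t):=\sum_{1\le n\le t}\widetilde{B}_1(n^{\alpha})$, so that Lemma \ref{lemma41} gives $S(t)\ll t^{1-\sigma_{\alpha}}\log t$ for $t\ge 2$ (and $S(t)=O(1)$ for $1\le t<2$). For $\Re(s)>1$ we write
\begin{equation*}
\hat{\zeta}_{\alpha}(s)=\int_{1^{-}}^{\infty}t^{-s}\,dS(t)=s\int_{1}^{\infty}\frac{S(t)}{t^{s+1}}\,dt,
\end{equation*}
the boundary term at infinity vanishing because $S(t)t^{-s}\ll t^{1-\sigma_{\alpha}-\Re(s)}\log t\to 0$ once $\Re(s)>1-\sigma_{\alpha}$.

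Next I would observe that the last integral converges absolutely and locally uniformly on the half-plane $\Re(s)>1-\sigma_{\alpha}$: indeed
\begin{equation*}
\int_{1}^{\infty}\frac{|S(t)|}{t^{\Re(s)+1}}\,dt\ll \int_{1}^{\infty}\frac{\log t}{t^{\Re(s)+\sigma_{\alpha}}}\,dt<\infty
\end{equation*}
whenever $\Re(s)+\sigma_{\alpha}>1$. Since the integrand is holomorphic in $s$ for each fixed $t$, Morera's theorem (or differentiation under the integral sign) shows that $s\mapsto s\int_{1}^{\infty}S(t)t^{-s-1}\,dt$ is holomorphic on $\Re(s)>1-\sigma_{\alpha}$, and it agrees with $\hat{\zeta}_{\alpha}(s)$ on $\Re(s)>1$; this is the desired holomorphic continuation.

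Finally, for the growth bound, fix $\varepsilon>0$ and take $\Re(s)\ge \varepsilon+1-\sigma_{\alpha}$. Then
\begin{equation*}
|\hat{\zeta}_{\alpha}(s)|\le |s|\int_{1}^{\infty}\frac{|S(t)|}{t^{\Re(s)+1}}\,dt\ll_{\varepsilon}|s|\int_{1}^{\infty}\frac{\log t}{t^{1+\varepsilon}}\,dt\ll_{\varepsilon}|s|\ll_{\varepsilon}1+|s|,
\end{equation*}
where the implied constant depends only on $\varepsilon$ (through the convergent integral $\int_{1}^{\infty}t^{-1-\varepsilon}\log t\,dt=\varepsilon^{-2}$). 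This gives exactly $\hat{\zeta}_{\alpha}(s)\ll_{\varepsilon}1+|s|$ on $\Re(s)\ge \varepsilon+1-\sigma_{\alpha}$, completing the proof.

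I do not expect any serious obstacle here: the entire content is the transfer of the arithmetic input (Lemma \ref{lemma41}) through Abel summation, and the only points requiring a modicum of care are the vanishing of the boundary term at $\infty$ and the justification of holomorphy of the resulting integral, both of which are immediate from the absolute convergence estimate above. The mildly delicate bookkeeping is handling the lower endpoint $t=1$ correctly (the Stieltjes measure $dS$ has an atom at $n=1$), which is why I would write the integral as $\int_{1^{-}}^{\infty}$ and absorb the $O(1)$ contribution of $1\le t<2$ into the error.
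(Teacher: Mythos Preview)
Your proposal is correct and is precisely the approach the paper takes: it states only that Proposition \ref{pro41} follows by ``integration by parts for a Riemann--Stieltjes integral'' from Lemma \ref{lemma41}, which is exactly your Abel-summation argument with $S(t)=\sum_{n\le t}\widetilde{B}_1(n^{\alpha})$. The only cosmetic point is that the bound $|S(t)|\ll t^{1-\sigma_\alpha}\log t$ from Lemma \ref{lemma41} is stated for $t\ge 2$ (and $\log t$ vanishes at $t=1$), so strictly speaking the contribution of $\int_{1}^{2}$ should be bounded by the trivial $|S(t)|=O(1)$ rather than folded into the $\log t$ estimate---but you already note this, and it does not affect the conclusion.
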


\subsection{Analytic continuation of $\zeta_{\kappa}(s)$}
\begin{proposition}\label{lemm2} For $\alpha\in \nb$ we have
\begin{equation}\label{eqzkai}
\zeta_{\kappa}(s)=\sum_{0\le h< \alpha}\binom{\alpha}{h}\zeta(s-h).
\end{equation}
\end{proposition}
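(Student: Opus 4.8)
The identity $\alpha\in\nb$ forces $\kappa=1/\alpha$, so that $\lfloor n^\kappa\rfloor=\lfloor n^{1/\alpha}\rfloor$. The plan is to re-sum the defining series of $\zeta_\kappa(s)$ by grouping the integers $n$ according to the common value of $\lfloor n^{1/\alpha}\rfloor$. First I would observe that $\lfloor n^{1/\alpha}\rfloor=m$ holds precisely when $m^\alpha\le n<(m+1)^\alpha$, and since $\alpha$ is a positive integer both endpoints are integers, so exactly $(m+1)^\alpha-m^\alpha$ values of $n$ satisfy $\lfloor n^{1/\alpha}\rfloor=m$. Hence, for $\Re(s)>\alpha$,
\begin{align*}
\zeta_\kappa(s)=\sum_{n\ge 1}\frac{1}{\lfloor n^{1/\alpha}\rfloor^s}=\sum_{m\ge 1}\frac{(m+1)^\alpha-m^\alpha}{m^s}.
\end{align*}

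Next I would apply the binomial theorem, $(m+1)^\alpha-m^\alpha=\sum_{0\le h<\alpha}\binom{\alpha}{h}m^h$, and interchange the (finite) sum over $h$ with the sum over $m$:
\begin{align*}
\zeta_\kappa(s)=\sum_{m\ge 1}\frac{1}{m^s}\sum_{0\le h<\alpha}\binom{\alpha}{h}m^h=\sum_{0\le h<\alpha}\binom{\alpha}{h}\sum_{m\ge 1}\frac{1}{m^{s-h}}=\sum_{0\le h<\alpha}\binom{\alpha}{h}\zeta(s-h).
\end{align*}
The interchange is legitimate because for $\Re(s)>\alpha$ each inner series $\sum_m m^{h-s}$ converges absolutely (the largest exponent shift is $h=\alpha-1$, which needs $\Re(s)>\alpha$, exactly the region where $\zeta_\kappa$ is defined), and there are only finitely many terms in $h$.

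Finally, since the right-hand side is a finite linear combination of shifted Riemann zeta functions, it is meromorphic on all of $\cb$; as the identity \eqref{eqzkai} holds on the half-plane $\Re(s)>\alpha$, it extends to the full meromorphic continuation of $\zeta_\kappa(s)$ by the identity theorem. I do not anticipate a genuine obstacle here: the only point requiring a word of justification is the interchange of summation, which is immediate from absolute convergence on $\Re(s)>\alpha$. (As a side benefit, this formula makes transparent the pole structure claimed in Proposition~\ref{pro1} for integral $\alpha$, namely simple poles at $s=\alpha,\alpha-1,\dots,1$ coming from the poles of $\zeta(s-h)$ at $s=h+1$, with the asserted residues $\binom{\alpha}{h}=\Gamma(\alpha+1)/((h+1)!\,\Gamma(\alpha-h))\cdot(\alpha-h)/(\alpha-h)$ after reindexing.)
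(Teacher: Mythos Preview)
Your proof is correct and follows essentially the same route as the paper's: group the terms of the defining series according to the common value $m=\lfloor n^{\kappa}\rfloor$, count that exactly $(m+1)^{\alpha}-m^{\alpha}$ integers $n$ land on each $m$ (using that $\alpha\in\nb$), expand by the binomial theorem, and swap the finite sum over $h$ with the sum over $m$. Your added remarks on absolute convergence and on the meromorphic continuation via the identity theorem are welcome clarifications that the paper leaves implicit; the parenthetical residue computation at the end is a bit garbled (the reindexing between $h$ and the pole location $s=\alpha-h'$ is not written out cleanly), but since it is only a side comment it does not affect the validity of the proof.
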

\begin{proof}Since $\alpha\in\nb$, the series for $\zeta_{\kappa}(s)$ can be rewritten as
\begin{align*}
\zeta_{\kappa}(s)&=\sum_{n\ge 1}\frac{\#\{\ell\in \nb: \lfloor \ell^{\kappa}\rfloor=n\}}{n^s}\\
&=\sum_{n\ge 1}\frac{(n+1)^{\alpha}-n^{\alpha}}{n^s}\\
&=\sum_{n\ge 1}\frac{1}{n^s}\sum_{0\le h< \alpha}\binom{\alpha}{h}n^{h},
\end{align*}
which completes the proof by interchanging the order of the above summation.
\end{proof}

\begin{proposition}\label{lemm3}For $\alpha\not\in \nb$, we have
\begin{equation}\label{eqra1}
\zeta_{\kappa}(s)=\sum_{1\le \ell\le \lceil\alpha\rceil}\frac{\Gamma(s+\ell)}{\ell!\Gamma(s)}\zeta(s+\ell-\alpha)-\frac{s}{2}\zeta(s+1)
-s\hat{\zeta}_{\alpha}(s+1)+sR_{\alpha}(s),
\end{equation}
where $R_{\alpha}(s)$ is given as below \eqref{eqra} and holomorphic function for all $\Re(s)>-\min(1/2, 1-\{\alpha\})$. Moreover, for all $\Re(s)\ge \varepsilon-\min(1/2, 1-\{\alpha\})$ with any small $\varepsilon>0$,
$$R_{\alpha}(s)\ll_{\varepsilon} (|s|+1)^{\lceil\alpha\rceil}.$$
\end{proposition}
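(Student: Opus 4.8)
\textbf{Proof proposal for Proposition \ref{lemm3}.}

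The plan is to start from the counting-function representation of $\zeta_\kappa(s)$ used already in Proposition \ref{lemm2}, namely
$$\zeta_\kappa(s)=\sum_{n\ge 1}\frac{c_\alpha(n)}{n^s},\qquad c_\alpha(n):=\#\{\ell\in\nb:\lfloor\ell^\kappa\rfloor=n\}=\lfloor(n+1)^\alpha\rfloor-\lfloor n^\alpha\rfloor,$$
valid for $\Re(s)>\alpha$ (this is where $\alpha=1/\kappa$ enters: $\ell^\kappa\in[n,n+1)$ iff $\ell\in[n^\alpha,(n+1)^\alpha)$). Writing $\lfloor n^\alpha\rfloor=n^\alpha-\{n^\alpha\}=n^\alpha-\widetilde B_1(n^\alpha)-\tfrac12$, I get $c_\alpha(n)=\big((n+1)^\alpha-n^\alpha\big)-\big(\widetilde B_1((n+1)^\alpha)-\widetilde B_1(n^\alpha)\big)$. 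Summing the second (telescoping-type) piece against $n^{-s}$ and performing an Abel summation / reindexing $n\mapsto n-1$ produces the term $-s\,\widehat\zeta_\alpha(s+1)$ together with a correction; combining the constant $\tfrac12$ shifts similarly gives the $-\tfrac s2\zeta(s+1)$ term. I should be careful to keep the boundary term at $n=1$ explicit — this is one source of the polynomial factor $R_\alpha(s)$.

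For the main piece $\sum_{n\ge1}\big((n+1)^\alpha-n^\alpha\big)n^{-s}$, the idea is to use Taylor's theorem on $f(x)=(n+x)^\alpha$ at $x=0$: for a fixed integer $\lceil\alpha\rceil$ one has
$$(n+1)^\alpha-n^\alpha=\sum_{1\le\ell\le\lceil\alpha\rceil}\binom{\alpha}{\ell}n^{\alpha-\ell}+\text{(integral remainder)},$$
where the remainder is $\frac{1}{\lceil\alpha\rceil!}\int_0^1(1-t)^{\lceil\alpha\rceil}\,\alpha(\alpha-1)\cdots(\alpha-\lceil\alpha\rceil)\,(n+t)^{\alpha-\lceil\alpha\rceil-1}\,dt$, which is $O(n^{\alpha-\lceil\alpha\rceil-1})=O(n^{-1-\{\alpha\}+\text{(integer shift)}})$ — wait, more precisely $O(n^{\alpha-\lceil\alpha\rceil-1})$, and since $\alpha-\lceil\alpha\rceil-1=-\{\alpha\}-1$ when $\alpha\notin\nb$... let me instead say the remainder is $O(n^{\alpha-\lceil\alpha\rceil-1})$, so $\sum_n(\text{remainder})n^{-s}$ converges absolutely and defines a holomorphic function for $\Re(s)>\alpha-\lceil\alpha\rceil=-\{\alpha\}$ — in fact I will bundle it, the $n=1$ boundary terms, and the leftover from the $\widetilde B_1$ reindexing into a single holomorphic function, call it $s\,R_\alpha(s)$; factoring out one power of $s$ (legitimate since everything vanishing appropriately, or just absorbing) is cosmetic to match the displayed formula. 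The sum $\sum_{n\ge1}\binom{\alpha}{\ell}n^{\alpha-\ell-s}=\binom{\alpha}{\ell}\zeta(s+\ell-\alpha)$, and rewriting $\binom{\alpha}{\ell}=\frac{\alpha(\alpha-1)\cdots(\alpha-\ell+1)}{\ell!}=\frac{\Gamma(\alpha+1)}{\ell!\,\Gamma(\alpha-\ell+1)}$... but the statement has $\frac{\Gamma(s+\ell)}{\ell!\,\Gamma(s)}=\binom{s+\ell-1}{\ell}$, not a binomial in $\alpha$. So in fact the decomposition must be arranged differently: I expect the authors expand not $(n+1)^\alpha-n^\alpha$ directly but rather write $\zeta_\kappa(s)$ via $\sum_\ell$ over $\ell$ of $\lfloor\ell^\kappa\rfloor^{-s}$ and compare to $\sum_\ell(\ell^\kappa)^{-s}$; then $\lfloor\ell^\kappa\rfloor^{-s}=(\ell^\kappa-\{\ell^\kappa\})^{-s}=\ell^{-\kappa s}(1-\{\ell^\kappa\}\ell^{-\kappa})^{-s}$ and Taylor-expand $(1-w)^{-s}=\sum_\ell\binom{s+\ell-1}{\ell}w^\ell$ — that is exactly where $\frac{\Gamma(s+\ell)}{\ell!\Gamma(s)}$ comes from. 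So the correct plan is: expand $(1-\{\ell^\kappa\}/\ell^\kappa)^{-s}$ to order $\lceil\alpha\rceil$ with integral remainder, giving $\zeta_\kappa(s)=\sum_{0\le\ell\le\lceil\alpha\rceil}\frac{\Gamma(s+\ell)}{\ell!\Gamma(s)}\sum_{n\ge1}\{n^\kappa\}^\ell n^{-\kappa(s+\ell)}+(\text{remainder})$, then for each $\ell\ge1$ handle $\sum_n\{n^\kappa\}^\ell n^{-\kappa s-\kappa\ell}$ — the $\ell=0$ term is $\zeta(\kappa s)$ which for $\alpha\notin\nb$... hmm, this is getting delicate; I'd reconcile signs and the $-\tfrac s2\zeta(s+1)-s\widehat\zeta_\alpha(s+1)$ combination by isolating the $\ell=1$ term $\{n^\kappa\}=\widetilde B_1(n^\kappa)+\tfrac12$.

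The analytic continuation then follows term by term: each $\zeta(s+\ell-\alpha)$ (or $\zeta(s+1)$) is meromorphic with a single simple pole, $\Gamma(s+\ell)/\Gamma(s)$ is entire (a polynomial in $s$ of degree $\ell$, hence the $(|s|+1)^{\lceil\alpha\rceil}$ growth once we also invoke the standard polynomial bound $\zeta(\sigma+it)\ll|t|^{O(1)}$), $\widehat\zeta_\alpha(s+1)$ is handled by Proposition \ref{pro41} which gives holomorphy for $\Re(s)>-\sigma_\alpha$ and the bound $\ll 1+|s|$, and $R_\alpha(s)$ is holomorphic for $\Re(s)>-\min(1/2,1-\{\alpha\})$ by absolute convergence of its defining Dirichlet series (the $n^{\alpha-\lceil\alpha\rceil-1}=n^{-\{\alpha\}-1}$ tail gives the $1-\{\alpha\}$, and a separate $n^{-1/2}$-type tail from a sharper remainder split gives the $1/2$), with growth bounded by the number of terms times the polynomial $\Gamma$-quotient, i.e. $\ll(|s|+1)^{\lceil\alpha\rceil}$. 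The main obstacle will be bookkeeping: getting the exact coefficients $-s/2$ and $-s$ in front of $\zeta(s+1)$ and $\widehat\zeta_\alpha(s+1)$, and correctly collecting all the $n=1$ boundary terms and higher-order Taylor remainders into a single clean $R_\alpha(s)$ with the claimed domain $\Re(s)>-\min(1/2,1-\{\alpha\})$ — in particular tracking why it is $\min(1/2,1-\{\alpha\})$ and not just $1-\{\alpha\}$, which presumably comes from treating the $\ell=1$ remainder term (involving $\widetilde B_1(n^\kappa)^2$ or a second-order expansion) separately with its own convergence abscissa governed by the $\tfrac12$.
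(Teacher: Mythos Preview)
Neither of your two sketched routes reaches the formula in the statement. Your first attempt (Taylor-expand $(n+1)^\alpha-n^\alpha$) you correctly abandon since it produces $\binom{\alpha}{\ell}$ rather than $(s)_\ell/\ell!$. Your second attempt (expand $\lfloor \ell^\kappa\rfloor^{-s}=\ell^{-\kappa s}(1-\{\ell^\kappa\}\ell^{-\kappa})^{-s}$) does generate the right Pochhammer factors, but the resulting sums are $\sum_\ell \{\ell^\kappa\}^j\ell^{-\kappa(s+j)}$: the leading $j=0$ term is $\zeta(\kappa s)$, not the family $\zeta(s+\ell-\alpha)$, and the $j=1$ term involves $\{\ell^\kappa\}$ summed against $\ell^{-\kappa(s+1)}$, which has nothing to do with $\hat\zeta_\alpha(s+1)=\sum_n\widetilde B_1(n^{\alpha})n^{-s-1}$. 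So you cannot recover the displayed identity this way, and in particular the poles at $s=\alpha-1,\alpha-2,\dots$ required for Proposition~\ref{pro1} never appear.

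The paper's actual mechanism is different from both. Starting from the counting form (with $\lceil\cdot\rceil$, not $\lfloor\cdot\rfloor$: the number of integers in $[n^\alpha,(n+1)^\alpha)$ is $\lceil(n+1)^\alpha\rceil-\lceil n^\alpha\rceil$), one Abel-sums to
\[
\zeta_\kappa(s)=\sum_{n\ge 2}\Big(\frac{1}{(n-1)^s}-\frac{1}{n^s}\Big)\big(\lceil n^\alpha\rceil-1\big)
=\sum_{n\ge 2}\frac{(1-1/n)^{-s}-1}{n^s}\,\big(n^\alpha-{\bf 1}_{n^\alpha\in\zb}-\{n^\alpha\}\big),
\]
and then Taylor-expands $(1-1/n)^{-s}$ in the variable $1/n$ (order $\lceil\alpha\rceil$ against the $n^\alpha$ factor, order $1$ against $\{n^\alpha\}$). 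This is what simultaneously produces $(s)_\ell/\ell!$ \emph{and} $\zeta(s+\ell-\alpha)$ from $n^\alpha\cdot n^{-s-\ell}$, while the $\{n^\alpha\}$ piece gives $s\sum_n\{n^\alpha\}n^{-s-1}$, split via $\{n^\alpha\}=\tfrac12+\widetilde B_1(n^\alpha)$ into $\tfrac{s}{2}\zeta(s+1)+s\hat\zeta_\alpha(s+1)$. Finally, your guess about the origin of the $1/2$ in $\min(1/2,1-\{\alpha\})$ is off: it comes from the indicator ${\bf 1}_{n^\alpha\in\zb}$, which for rational $\alpha$ with denominator $d_\alpha\ge 2$ contributes a series $\sum_{m\ge 2}m^{-d_\alpha(s+1)}(\cdots)$ converging only for $\Re(s)>1/d_\alpha-1\ge -1/2$.
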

\begin{proof}The series for $\zeta_{\kappa}(s)$ can be rewritten as
\begin{align*}
\zeta_{\kappa}(s)&=\sum_{n\ge 1}\frac{\#\{\ell\in \nb: \lfloor \ell^{\kappa}\rfloor=n\}}{n^s}\\
&=\sum_{n\ge 1}\frac{\lceil (n+1)^{\alpha}\rceil-\lceil n^{\alpha}\rceil}{n^s}\\
&=\sum_{n\ge 2}\left(\frac{1}{(n-1)^s}-\frac{1}{n^s}\right)(\lceil n^{\alpha}\rceil-1).
\end{align*}
Let us define that
\begin{align}
d_{\alpha}=
\begin{cases} \qquad\qquad\qquad\qquad \infty, \quad &\alpha\not\in \qb,\\
\text{the denominator of reduced fraction of}~\alpha,\quad &\alpha\in \qb.
\end{cases}
\end{align}
Since $\alpha\not\in\nb$, it is clear that $d_\alpha\ge 2$.  Using the fact that for all $x\in\rb$,
\begin{align*}
\lceil x\rceil-1=x-{\bf 1}_{x\in \zb}-\{x\},
\end{align*}
we have
\begin{align*}
\zeta_{\kappa}(s)&=\sum_{n\ge 2}\left(\left(1-\frac 1n\right)^{-s}-1\right)\frac{ n^{\alpha}-{\bf 1}_{n^{\alpha}\in \zb}-\{n^{\alpha}\}}{n^s}\\
&=\sum_{n\ge 2}\frac{\left(1-\frac 1n\right)^{-s}-1}{n^{s-\alpha}}-\sum_{n\ge 2}\frac{\left(1-\frac {1}{n^{d_{\alpha}}}\right)^{-s}-1}{n^{d_{\alpha}s}}-\sum_{n\ge 2}\frac{\left(\left(1-\frac 1n\right)^{-s}-1\right)\{n^{\alpha}\}}{n^s}.
\end{align*}
Using the classical Taylor's theorem, we have
$$\left(1-t\right)^{-s}=\sum_{0\le j\le h}\frac{(s)_jt^j}{j!}+\frac{(s)_{h+1}t^{h+1}}{h!}\int_{0}^{1}\frac{(1-\theta)^{h}\,d\theta}{(1-t\theta)^{s+h}},\quad |t|<1,$$
for each integer $h\ge 0$, where $(s)_{\ell}=s(s+1)\cdots(s+\ell-1)=\Gamma(s+\ell)/\Gamma(s)$ is the Pochhammer symbol. Thus, we have
\begin{align*}
\zeta_{\kappa}(s)=\sum_{n\ge 1}\frac{1}{n^{s-\alpha}}\sum_{1\le \ell\le \lceil\alpha\rceil}\frac{(s)_{\ell}}{\ell !}\frac{1}{n^{\ell}}+s\sum_{n\ge 1}\frac{1/2-\{n^{\alpha}\}}{n^{s+1}}-\frac{s\zeta(s+1)}{2}+sR_\alpha(s),
\end{align*}
where
\begin{align}\label{eqra}
R_\alpha(s)=&-\sum_{n\ge 2}\frac{1}{n^{d_{\alpha}(s+1)}}\int_{0}^{1}\frac{\,d\theta}{(1-\theta/n^{d_\alpha})^{s+1}}-\sum_{n\ge 2}\frac{(s+1)\{n^{\alpha}\}}{n^{s+2}}\int_{0}^{1}\frac{(1-\theta)\,d\theta}{(1-\theta/n)^{s+2}}\nonumber\\
&+\sum_{n\ge 2}\frac{(s+1)_{\lceil\alpha\rceil}}{n^{s-\alpha+ \lceil\alpha\rceil+1}\lceil\alpha\rceil!}
\int_{0}^{1}\frac{(1-\theta)^{\lceil\alpha\rceil}\,d\theta}
{(1-\theta/n)^{s+\lceil\alpha\rceil+1}}-\sum_{1\le \ell\le \lceil\alpha\rceil}\frac{(s+1)_{\ell-1}}{\ell !}.
\end{align}
It is clear that $R_{\alpha}(s)$ holomorphic for all $\Re(s)>-\min(1-\{\alpha\}, 1/2)$.
Moreover, for $\sigma=\Re(s)\ge \varepsilon-\min(1-\{\alpha\}, 1/2)$ with any $\varepsilon>0$, we have
\begin{align*}
|R_\alpha(s)|&\ll \sum_{n\ge 2}\frac{1}{n^{d_{\alpha}(\sigma+1)}}+\sum_{n\ge 2}\frac{(|s|+1)}{n^{\sigma+2}}+\sum_{n\ge 2}\frac{(|s|+1)_{\lceil\alpha\rceil}}{n^{\sigma-\alpha+ \lceil\alpha\rceil+1}}+\sum_{0\le \ell< \lceil\alpha\rceil}\frac{(|s|+1)_{\ell}}{(\ell+1) !}\\
&\ll \zeta(2\sigma+2)+(|s|+1)\zeta(\sigma+2)+(|s|+1)_{\lceil\alpha\rceil}\zeta(\sigma+2-\{\alpha\})\\
&\ll_{\varepsilon} (|s|+1)_{\lceil\alpha\rceil}\ll (|s|+1)^{\lceil\alpha\rceil}.
\end{align*}
 This completes the proof of this proposition.
\end{proof}

It is well-known that $\zeta(s)$ can be monomorphic continued analytically to whole complex plane $\cb$ with only pole $s=1$. From Proposition \ref{pro41}, $\hat{\zeta}_{\alpha}(s)$ can be holomorphic continued analytically to $\Re(s)>1-\sigma_{\alpha}$.
Therefore, from Proposition \ref{lemm2} and Proposition \ref{lemm3}, we find that there exists
$$\sigma_{\kappa}=\min(\sigma_\alpha, 1/2, 1-\{\alpha\})\in(0,1]$$
such that $\zeta_{\kappa}(s)$ can be monomorphic continued analytically to $\Re(s)>-\sigma_{\kappa}$ and all poles lies on $\Re(s)>-\sigma_{\kappa}$ which are simple listed as follows
$$\alpha, \alpha-1, \ldots, \alpha+1-\lceil \alpha\rceil.$$
Combing with \eqref{eqzkai} and \eqref{eqra1}, it is clear that $$\res_{s=\alpha-h}\left(\zeta_\kappa(s)\right)=\frac{\Gamma(\alpha+1)}{(h+1)!\Gamma(\alpha-h)},$$
and for all $\Re(s)\ge\varepsilon-\sigma_{\kappa}$  with any $\varepsilon>0$ and $|\Im(s)|\ge 1$,
$$\zeta_{\kappa}(s)\ll_\varepsilon |s|(|s|+1)^{\lfloor \alpha\rfloor+1}.$$
Then, under the above discussion, the proof of of Proposition \ref{pro1} will follows from below subsection about the evaluating of $\zeta_{\kappa}(0)$ and $\zeta_\kappa'(0)$.

\subsection{The evaluating of $\zeta_{\kappa}(0)$ and $\zeta_\kappa'(0)$}
\begin{proposition}\label{lemm4} For $\alpha\in \nb$ we have
$$\zeta_{\kappa}(0)=-\frac{\alpha}{\alpha+1}\;\quad\mbox{and}\;\quad \zeta_{\kappa}'(0)=\sum_{0\le h< \alpha}\binom{\alpha}{h}\zeta'(-h).$$
\end{proposition}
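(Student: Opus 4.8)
The plan is to read off both evaluations directly from Proposition~\ref{lemm2}. Since $\alpha\in\nb$, that result gives the identity $\zeta_{\kappa}(s)=\sum_{0\le h<\alpha}\binom{\alpha}{h}\zeta(s-h)$ between meromorphic functions, and for every $h$ with $0\le h<\alpha$ the term $\zeta(s-h)$ is holomorphic at $s=0$, its only pole sitting at $s=h+1\ge 1$. Hence the right-hand side is holomorphic at $s=0$, and we may evaluate the identity and its $s$-derivative there termwise, obtaining $\zeta_{\kappa}(0)=\sum_{0\le h<\alpha}\binom{\alpha}{h}\zeta(-h)$ and $\zeta_{\kappa}'(0)=\sum_{0\le h<\alpha}\binom{\alpha}{h}\zeta'(-h)$. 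The latter is already the asserted formula for $\zeta_{\kappa}'(0)$, so only the evaluation $\sum_{0\le h<\alpha}\binom{\alpha}{h}\zeta(-h)=-\alpha/(\alpha+1)$ remains.

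To establish this, I would insert the classical special values $\zeta(-h)=(-1)^{h}B_{h+1}/(h+1)$ for $h\ge 0$, where $B_k$ is the $k$-th Bernoulli number with $B_1=-\tfrac12$, together with the absorption identity $\tfrac{1}{h+1}\binom{\alpha}{h}=\tfrac{1}{\alpha+1}\binom{\alpha+1}{h+1}$. Setting $i=h+1$ rewrites the sum as $\tfrac{1}{\alpha+1}\sum_{1\le i\le\alpha}(-1)^{i-1}\binom{\alpha+1}{i}B_{i}$. Since $B_i=0$ for every odd $i\ge 3$, replacing $(-1)^{i-1}B_i$ by $-B_i$ changes only the $i=1$ term, and the change amounts to adding $(\alpha+1)$; so the inner sum equals $-\sum_{1\le i\le\alpha}\binom{\alpha+1}{i}B_i+(\alpha+1)$. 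Now the Bernoulli recurrence $\sum_{0\le i\le m-1}\binom{m}{i}B_i=0$ (valid for all integers $m\ge 2$), applied with $m=\alpha+1\ge 2$, gives $\sum_{1\le i\le\alpha}\binom{\alpha+1}{i}B_i=-B_0=-1$. Combining, the inner sum equals $-\alpha$, and therefore $\zeta_{\kappa}(0)=\sum_{0\le h<\alpha}\binom{\alpha}{h}\zeta(-h)=-\alpha/(\alpha+1)$, which finishes the proof.

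The argument is entirely elementary and I do not anticipate a genuine obstacle; the one place that calls for a little care is the discrepancy between the range $0\le h<\alpha$ forced by Proposition~\ref{lemm2} and the full range of the Bernoulli recurrence, which is precisely what makes the answer $-\alpha/(\alpha+1)$ rather than $0$. As a consistency check one may note that $-\alpha/(\alpha+1)=-1/(1+\kappa)$ because $\kappa=1/\alpha$, in agreement with the general formula for $\zeta_\kappa(0)$ in Proposition~\ref{pro1}; and for small $\alpha$ the formulas specialize correctly, e.g. $\alpha=1$ gives $\zeta_{1}=\zeta$ with $\zeta(0)=-\tfrac12$ and $\zeta'(0)=-\tfrac12\log(2\pi)$, while $\alpha=2$ gives $\zeta_{\kappa}(0)=\zeta(0)+2\zeta(-1)=-\tfrac12-\tfrac16=-\tfrac23$.
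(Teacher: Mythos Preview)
Your argument follows the paper's proof essentially verbatim: both start from Proposition~\ref{lemm2}, read off $\zeta_\kappa'(0)$ by termwise differentiation, and evaluate $\zeta_\kappa(0)$ by reducing $\sum_{0\le h<\alpha}\binom{\alpha}{h}\zeta(-h)$ to the Bernoulli recurrence $\sum_{0\le i\le \alpha}\binom{\alpha+1}{i}B_i=0$. There is one sign slip in your intermediate step: replacing $(-1)^{i-1}B_i$ by $-B_i$ \emph{increases} the $i=1$ term by $(\alpha+1)$, so the inner sum equals $-\sum_{1\le i\le\alpha}\binom{\alpha+1}{i}B_i-(\alpha+1)=1-(\alpha+1)=-\alpha$ (with a minus, not a plus, in front of $(\alpha+1)$); with this correction your chain of equalities is consistent and matches the stated conclusion.
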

\begin{proof}For $\alpha\in \nb$, from \eqref{eqzkai}
$$\zeta_\kappa(s)=\sum_{0\le h< \alpha}\binom{\alpha}{h}\zeta(s-h),$$
and the well-known facts that $\zeta(0)=-1/2$ and $\zeta(-h)=-B_{h+1}/(h+1), h\in\nb$, where $B_{\ell}$ is the usual $\ell$-th Bernoulli number, we have
\begin{align*}
\sum_{0\le h< \alpha}\binom{\alpha}{h}\zeta(-h)&=-\frac{1}{2}-\sum_{1\le h< \alpha}\binom{\alpha}{h}\frac{B_{h+1}}{h+1}\\
&=-\frac{1}{2}-\frac{1}{1+\alpha}\left(\sum_{0\le h\le \alpha}\binom{\alpha+1}{h}B_{h}-1-\binom{\alpha+1}{1}B_{1}\right)\\
&=-\frac{1}{2}-\frac{1}{1+\alpha}\cdot\frac{\alpha-1}{2}=-\frac{\alpha}{\alpha+1}.
\end{align*}
Here we use the fact that $B_1=-\frac 12$ and for $\alpha\in\nb$,
$$\sum_{0\le h\le \alpha}\binom{\alpha+1}{h}B_{h}=0.$$
This completes the proof.
\end{proof}
\begin{lemma}
For $\alpha\not\in \nb$ we have
 $$\zeta_{\kappa}(0)=-1/2$$
 and
\begin{align}
\zeta_{\kappa}'(0)=\sum_{1\le h\le \lceil\alpha\rceil}\frac{\zeta(h-\alpha)-1}{h}+\sum_{n\ge 2}\log\left(\frac{E_0(1/n)^{\widetilde{B}_1(n^{\alpha})+{\bf 1}_{n^{\alpha}\in\nb}}}{E_{\lceil\alpha\rceil}(1/n)^{n^{\alpha}}}\right).
\end{align}
\end{lemma}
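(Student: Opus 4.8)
The plan is to obtain both $\zeta_\kappa(0)$ and $\zeta_\kappa'(0)$ directly from the explicit decomposition \eqref{eqra1} of Proposition \ref{lemm3}, which is valid in a neighborhood of $s=0$ since $\sigma_\kappa>0$. Write
$$
\zeta_{\kappa}(s)=\sum_{1\le \ell\le \lceil\alpha\rceil}\frac{\Gamma(s+\ell)}{\ell!\,\Gamma(s)}\zeta(s+\ell-\alpha)-\frac{s}{2}\zeta(s+1)-s\hat{\zeta}_{\alpha}(s+1)+sR_{\alpha}(s).
$$
First I would evaluate at $s=0$. For each $\ell$ with $1\le\ell\le\lceil\alpha\rceil$ the ratio $\Gamma(s+\ell)/\Gamma(s)=(s)_\ell=s(s+1)\cdots(s+\ell-1)$ vanishes at $s=0$ (it has a simple zero coming from the factor $s$), and $\zeta(s+\ell-\alpha)$ is finite there because $\ell-\alpha\ne 1$ (as $\alpha\notin\nb$, $\ell\le\lceil\alpha\rceil<\alpha+1$). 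The term $-\tfrac s2\zeta(s+1)$ has the simple pole of $\zeta(s+1)$ at $s=0$ killed by the prefactor $s$, contributing $-\tfrac12\operatorname{Res}_{s=1}\zeta(s)=-\tfrac12$; the terms $-s\hat\zeta_\alpha(s+1)$ and $sR_\alpha(s)$ vanish at $s=0$ since $\hat\zeta_\alpha$ and $R_\alpha$ are holomorphic near $s=0$ by Propositions \ref{pro41} and \ref{lemm3}. Hence $\zeta_\kappa(0)=-1/2$.

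Next I would differentiate. Because every summand except $-\tfrac s2\zeta(s+1)$ carries an overall factor $s$ (the $\ell$-sum terms through $(s)_\ell$, and the last two explicitly), the derivative at $s=0$ only sees the $s^1$-coefficient of each. For the $\ell$-th term, $(s)_\ell = s\,(\ell-1)!\,\bigl(1+O(s)\bigr)$, so $\frac{(s)_\ell}{\ell!\Gamma(s)}=\frac{s}{\ell}(1+O(s))$ and its contribution to $\zeta_\kappa'(0)$ is $\zeta(\ell-\alpha)/\ell$. The term $-\tfrac s2\zeta(s+1)$: expanding $\zeta(s+1)=\tfrac1s+\gamma+O(s)$ gives $-\tfrac s2\zeta(s+1)=-\tfrac12-\tfrac{\gamma}{2}s+O(s^2)$, so it contributes $-\gamma/2$. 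The term $-s\hat\zeta_\alpha(s+1)$ contributes $-\hat\zeta_\alpha(1)$, and $sR_\alpha(s)$ contributes $R_\alpha(0)$. Collecting,
$$
\zeta_\kappa'(0)=\sum_{1\le\ell\le\lceil\alpha\rceil}\frac{\zeta(\ell-\alpha)}{\ell}-\frac{\gamma}{2}-\hat\zeta_\alpha(1)+R_\alpha(0).
$$
It then remains to identify $-\gamma/2-\hat\zeta_\alpha(1)+R_\alpha(0)+\sum_\ell\zeta(\ell-\alpha)/\ell$ with the claimed closed form; the natural route is to recognize $-\sum_{1\le\ell\le\lceil\alpha\rceil}\frac1\ell+\bigl(\text{the }\ell\text{-sum of }\zeta(\ell-\alpha)/\ell\bigr)$ together with the series defining $R_\alpha$ and $\hat\zeta_\alpha$ as the logarithm of a convergent product over $n\ge2$ of canonical factors $E_{\lceil\alpha\rceil}(1/n)$ and $E_0(1/n)$, since $\log E_h(1-x)$-type expansions are precisely what the Taylor-with-remainder identity $(1-1/n)^{-s}=\sum_{j\le h}\binom{-s}{j}(-1/n)^j+\cdots$ produces after summing over $s$-derivatives.

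The routine-but-careful bookkeeping at $s=0$ (tracking which $O(s)$ terms survive) is mechanical; the genuine obstacle is the last identification step — showing that the leftover constant assembles into $\sum_{n\ge2}\log\bigl(E_0(1/n)^{\widetilde B_1(n^\alpha)+\mathbf 1_{n^\alpha\in\nb}}/E_{\lceil\alpha\rceil}(1/n)^{n^\alpha}\bigr)$. I would handle this by going back one step before the Taylor expansion in the proof of Proposition \ref{lemm3}, i.e. to the representation $\zeta_\kappa(s)=\sum_{n\ge2}\bigl((1-1/n)^{-s}-1\bigr)(\lceil n^\alpha\rceil-1)n^{-s}$, using $\lceil n^\alpha\rceil-1=n^\alpha-\mathbf 1_{n^\alpha\in\zb}-\{n^\alpha\}=n^\alpha-\mathbf 1_{n^\alpha\in\nb}-\widetilde B_1(n^\alpha)-\tfrac12$, differentiating under the (absolutely convergent, near $s=0$) sum, and evaluating at $s=0$: $\frac{d}{ds}\big|_{0}\bigl((1-1/n)^{-s}-1\bigr)=-\log(1-1/n)$, which directly feeds the factors $E_0(1/n)$ and, after subtracting the partial Taylor polynomial that was split off into the $\Gamma$-terms, the factors $E_{\lceil\alpha\rceil}(1/n)$. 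Matching exponents $n^\alpha$, $\widetilde B_1(n^\alpha)$, $\mathbf 1_{n^\alpha\in\nb}$ against the three pieces of $\lceil n^\alpha\rceil-1$ then yields the stated formula, with the $\sum_{h}(\zeta(h-\alpha)-1)/h$ absorbing the closed-form contributions of the split-off Taylor polynomial and of $-s\zeta(s+1)/2$.
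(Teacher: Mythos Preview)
Your derivation up through
\[
\zeta_\kappa'(0)=\sum_{1\le\ell\le\lceil\alpha\rceil}\frac{\zeta(\ell-\alpha)}{\ell}-\frac{\gamma}{2}-\hat\zeta_\alpha(1)+R_\alpha(0)
\]
is correct and coincides with the paper's argument (this is exactly their equation \eqref{eqrra}).

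The gap is in your final identification step. You propose to return to the representation
\[
\zeta_\kappa(s)=\sum_{n\ge2}\bigl((1-1/n)^{-s}-1\bigr)\bigl(\lceil n^\alpha\rceil-1\bigr)n^{-s}
\]
and differentiate under the sum, calling it ``absolutely convergent, near $s=0$''. It is not: for $s$ near $0$ but nonzero the general term behaves like $s\,n^{\alpha-1-s}$, and since $\alpha=1/\kappa>0$ the series diverges for every such $s$. This series only converges for $\Re(s)>\alpha$; the whole purpose of the Taylor-with-remainder decomposition in Proposition \ref{lemm3} was to manufacture a representation valid near $s=0$. Even after you split $\lceil n^\alpha\rceil-1$ into bounded and unbounded pieces, the ``$-\tfrac12$'' piece still produces a divergent $\sum 1/n$ at $s=0$, so you cannot bypass $R_\alpha(0)$ by going back to the raw series.

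The paper instead evaluates $R_\alpha(0)$ directly from the explicit, genuinely convergent integral formula \eqref{eqra}. The two ingredients you are missing are the closed form
\[
\int_{0}^{1}\frac{(1-\theta)^{h}\,d\theta}{(1-\theta/n)^{h+1}}=-n^{h+1}\Bigl(\sum_{1\le j\le h}\frac{1}{jn^j}+\log\bigl(1-\tfrac{1}{n}\bigr)\Bigr),
\]
which turns each integral in \eqref{eqra} into a $\log E_h(1/n)$-type expression, and the identity $\sum_{n\ge2}\bigl(\tfrac1n+\log(1-\tfrac1n)\bigr)=\gamma-1$, which is what makes the $-\gamma/2$ and the stray $\hat\zeta_\alpha(1)$ cancel against pieces of $R_\alpha(0)$. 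Once these are in hand, the canonical-factor structure you anticipated does emerge, and the $-\sum_{1\le h\le\lceil\alpha\rceil}1/h$ coming from the last term of \eqref{eqra} combines with $\sum_h\zeta(h-\alpha)/h$ to give the $\sum_h(\zeta(h-\alpha)-1)/h$ in the statement.
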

\begin{proof}  It directly follows from \eqref{eqra1} and  the fact that $\zeta(s+1)=1/s+\gamma+O(s)$ that $\zeta_\kappa(0)=-1/2$ and
\begin{align}\label{eqrra}
\zeta_\kappa'(0)=\sum_{1\le \ell\le \lceil\alpha\rceil}\frac{\zeta(\ell-\alpha)}{\ell}-\frac{\gamma}{2}-\hat{\zeta}_{\alpha}(1)+R_\alpha(0).
\end{align}
Here $\gamma$ is the Euler-Mascheroni constant.
From \eqref{eqra}, we have
\begin{align*}
R_\alpha(0)=&\sum_{n\ge 2}\log\left(1-\frac{1}{n^{d_\alpha}}\right)-\sum_{n\ge 2}\frac{\{n^{\alpha}\}}{n^{2}}\int_{0}^{1}(1-\theta)(1-\theta/n)^{-2}\,d\theta\nonumber\\
&+\sum_{n\ge 2}\frac{1}{n^{2-\{\alpha\}}}
\int_{0}^{1}(1-\theta)^{\lceil\alpha\rceil}(1-\theta/n)^{-\lceil\alpha\rceil-1}\,d\theta-\sum_{1\le \ell\le \lceil\alpha\rceil}\frac{1}{\ell}.
\end{align*}
Note that for each integer $h\ge 0$,
\begin{align*}
\int_{0}^{1}\frac{(1-\theta)^{h}\,d\theta}{(1-\theta/n)^{h+1}}=-n^{h+1}\bigg(\sum_{1\le j\le h}\frac{1}{jn^j}+\log\left(1-\frac{1}{n}\right)\bigg),
\end{align*}
we further have
\begin{align*}
R_\alpha(0)=&\sum_{n\ge 2}\log\left(1-\frac{1}{n^{d_\alpha}}\right)+\sum_{n\ge 2}\{n^{\alpha}\}\left(\frac{1}{n}+\log\left(1-\frac{1}{n}\right)\right)\nonumber\\
&-\sum_{n\ge 2}n^{\lceil\alpha\rceil-1+\{\alpha\}}
\bigg(\sum_{1\le j\le \lceil\alpha\rceil}\frac{1}{jn^j}+\log\left(1-\frac{1}{n}\right)\bigg)-\sum_{1\le \ell\le \lceil\alpha\rceil}\frac{1}{\ell}.
\end{align*}
Using the fact that
$$\sum_{n\ge 2}\left(\frac{1}{n}+\log\left(1-\frac{1}{n}\right)\right)=\gamma-1,$$
it implies that
\begin{align*}
R_\alpha(0)=&\sum_{n\ge 2}\log\left(1-\frac{1}{n^{d_\alpha}}\right)+\sum_{n\ge 2}(\{n^{\alpha}\}-1/2)\log\left(1-\frac{1}{n}\right)+\frac{\gamma}{2}+\hat{\zeta}_{\alpha}(1)\nonumber\\
&-\sum_{n\ge 2}n^{\alpha}
\bigg(\sum_{1\le j\le \lceil\alpha\rceil}\frac{1}{jn^j}+\log\left(1-\frac{1}{n}\right)\bigg)-\sum_{1\le \ell\le \lceil\alpha\rceil}\frac{1}{\ell}.
\end{align*}
Recall that $E_h(z)=(1-z)e^{\sum_{1\le j\le h}z^j/j}$ and $\widetilde{B}_1(x)=\{x\}-1/2$ and we can complete the proof.
\end{proof}
\paragraph{Acknowledgements.} Nian Hong Zhou was partially supported by the promotion project of basic ability for young and middle-aged teachers in Universities Province of Guangxi (2021KY0064).  Ya-Li Li was supported by the National Natural Science Foundation of China (Grant Nos. 11901156 and 11771211).
%\appendix
%\section{Numerical approximation for $\zeta_{\kappa}'(0)$}\label{A}

%\bibliographystyle{plain}

%\bibliography{test}

\begin{thebibliography}{10}

\bibitem{MR3856854}
B.C. Berndt, A. Malik, and A. Zaharescu.
\newblock Partitions into {$k$}th powers of terms in an arithmetic progression.
\newblock {\em Math. Z.}, 290(3-4):1277--1307, 2018.

\bibitem{MR3319122}
Y.-G. Chen and Y.-L. Li.
\newblock On the square-root partition function.
\newblock {\em C. R. Math. Acad. Sci. Paris}, 353(4):287--290, 2015.

\bibitem{MR4204766}
S. Chern.
\newblock Note on square-root partitions into distinct parts.
\newblock {\em Ramanujan J.}, 54(2):449--461, 2021.

\bibitem{MR4059127}
A. Ciolan.
\newblock Asymptotics and inequalities for partitions into squares.
\newblock {\em Int. J. Number Theory}, 16(1):121--143, 2020.

\bibitem{MR1544956}
J.G. van der Corput.
\newblock Zahlentheoretische {A}bsch\"{a}tzungen mit {A}nwendung auf
  {G}itterpunktprobleme.
\newblock {\em Math. Z.}, 28(1):301--310, 1928.


\bibitem{MR3730445}
A. Dunn and N. Robles.
\newblock Polynomial partition asymptotics.
\newblock {\em J. Math. Anal. Appl.}, 459(1):359--384, 2018.

\bibitem{MR535018}
P. Erd\H{o}s and B. Richmond.
\newblock Partitions into summands of the form {$[m\alpha ]$}.
\newblock In {\em Proceedings of the {S}eventh {M}anitoba {C}onference on
  {N}umerical {M}athematics and {C}omputing ({U}niv. {M}anitoba, {W}innipeg,
  {M}an., 1977)}, Congress. Numer., XX, pages 371--377. Utilitas Math.,
  Winnipeg, Man., 1978.

\bibitem{MR3459558}
A. Gafni.
\newblock Power partitions.
\newblock {\em J. Number Theory}, 163:19--42, 2016.

\bibitem{HR1918}
G.H. Hardy and S. Ramanujan.
\newblock Asymptotic {F}ormulaae in {C}ombinatory {A}nalysis.
\newblock {\em Proc. London Math. Soc. (2)}, 17:75--115, 1918.

\bibitem{MR5522}
A.E. Ingham.
\newblock A {T}auberian theorem for partitions.
\newblock {\em Ann. of Math. (2)}, 42:1075--1090, 1941.

\bibitem{MR2061214}
H. Iwaniec and E. Kowalski.
\newblock {\em Analytic number theory}, volume~53 of {\em American Mathematical
  Society Colloquium Publications}.
\newblock American Mathematical Society, Providence, RI, 2004.

\bibitem{MR0419394}
L. Kuipers and H. Niederreiter.
\newblock {\em Uniform distribution of sequences}.
\newblock Wiley-Interscience [John Wiley \& Sons], New York-London-Sydney,
  1974.
\newblock Pure and Applied Mathematics.

\bibitem{MR3511994}
Y.-L. Li and Y.-G. Chen.
\newblock On the {$r$}-th root partition function.
\newblock {\em Taiwanese J. Math.}, 20(3):545--551, 2016.

\bibitem{MR3778641}
Ya-Li Li and Yong-Gao Chen.
\newblock On the {$r$}-th root partition function, {II}.
\newblock {\em J. Number Theory}, 188:392--409, 2018.

\bibitem{LW}
Y.-L. Li and J. Wu.
\newblock On the $k$-th root partition function.
\newblock {\em Int. J. Number Theory, accepted for publication}.

\bibitem{MR3330365}
P. Liardet and A. Thomas.
\newblock Asymptotic formulas for partitions with bounded multiplicity.
\newblock In {\em Applied algebra and number theory}, pages 235--254. Cambridge
  Univ. Press, Cambridge, 2014.

\bibitem{MR3531239}
F. Luca and D. Ralaivaosaona.
\newblock An explicit bound for the number of partitions into roots.
\newblock {\em J. Number Theory}, 169:250--264, 2016.

\bibitem{MR62781}
G. Meinardus.
\newblock Asymptotische {A}ussagen \"{u}ber {P}artitionen.
\newblock {\em Math. Z.}, 59:388--398, 1954.

\bibitem{MR0364103}
H. Rademacher.
\newblock {\em Topics in analytic number theory}.
\newblock Springer-Verlag, New York-Heidelberg, 1973.
\newblock Edited by E. Grosswald, J. Lehner and M. Newman, Die Grundlehren der
  mathematischen Wissenschaften, Band 169.

\bibitem{Richmond1975}
B. Richmond.
\newblock A general asymptotic result for partitions.
\newblock {\em Canadian J. Math.}, 27(5):1083--1091 (1976), 1975.

\bibitem{MR382210}
L.B. Richmond.
\newblock Asymptotic relations for partitions.
\newblock {\em J. Number Theory}, 7(4):389--405, 1975.

\bibitem{MR412137}
L.B. Richmond.
\newblock Asymptotic relations for partitions.
\newblock {\em Trans. Amer. Math. Soc.}, 219:379--385, 1976.

\bibitem{MR43119}
K.F. Roth.
\newblock On the gaps between squarefree numbers.
\newblock {\em J. London Math. Soc.}, 26:263--268, 1951.

\bibitem{MR0067913}
K.F. Roth and G.Szekeres.
\newblock Some asymptotic formulae in the theory of partitions.
\newblock {\em Quart. J. Math., Oxford Ser. (2)}, 5:241--259, 1954.

\bibitem{MR236135}
W. Schwarz.
\newblock Schwache asymptotische {E}igenschaften von {P}artitionen.
\newblock {\em J. Reine Angew. Math.}, 232:1--16, 1968.

\bibitem{MR254004}
W. Schwarz.
\newblock Asymptotische {F}ormeln f\"{u}r {P}artitionen.
\newblock {\em J. Reine Angew. Math.}, 234:174--178, 1969.

\bibitem{MR3991428}
G. Tenenbaum, J. Wu, and Y.-L. Li.
\newblock Power partitions and saddle-point method.
\newblock {\em J. Number Theory}, 204:435--445, 2019.



\bibitem{MR2372796}
R.C. Vaughan.
\newblock On the number of partitions into primes.
\newblock {\em Ramanujan J.}, 15(1):109--121, 2008.

\bibitem{MR1424469}
E.T. Whittaker and G.N. Watson.
\newblock {\em A course of modern analysis}.
\newblock Cambridge Mathematical Library. Cambridge University Press,
  Cambridge, 1996.
\newblock An introduction to the general theory of infinite processes and of
  analytic functions; with an account of the principal transcendental
  functions, Reprint of the fourth (1927) edition.

\bibitem{MR1555393}
E.M. Wright.
\newblock Asymptotic partition formulae. {III}. {P}artitions into {$k$}-th
  powers.
\newblock {\em Acta Math.}, 63(1):143--191, 1934.

\bibitem{MR1848510}
Y. Yang.
\newblock Inverse problems for partition functions.
\newblock {\em Canad. J. Math.}, 53(4):866--896, 2001.

\bibitem{Zhou1}
N.H. Zhou.
\newblock Note on partitions into polynomials with number of parts in an
  arithmetic progression.
\newblock {\em Int. J. Number Theory, accepted for publication}.

\bibitem{Zhou2}
N.~H. Zhou.
\newblock Partitions into beatty sequences.
\newblock {\em arXiv:2008.10500}.

\end{thebibliography}

\bigskip
\noindent
{\sc Nian Hong Zhou\\
School of Mathematics and Statistics, Guangxi Normal University\\
No.1 Yanzhong Road, Yanshan District, Guilin, 541006\\
Guangxi, PR China}\newline
Email:~\href{mailto:nianhongzhou@outlook.com; nianhongzhou@gxnu.edu.cn}{\small nianhongzhou@outlook.com; nianhongzhou@gxnu.edu.cn}

\bigskip
\noindent
{\sc Ya-li Li\\
School of Mathematics and Statistics, Henan University\\
Kaifeng 475001\\
Henan, PR China}\newline
Email:~\href{njliyali@sina.com}{\small njliyali@sina.com}

\end{document}